\newtheorem{theorem}{Theorem}
\newtheorem{corollary}[theorem]{Corollary}
\newtheorem{definition}{Definition}
\newtheorem{lemma}[theorem]{Lemma}
\newtheorem{proposition}[theorem]{Proposition}
\newenvironment{proof}{\par\bproof}{\eproof\(\qed\) \par\medskip}
\newcommand{\qed}{\quad\mbox{\rule{7pt}{7pt}}}
\newcommand{\NN}{\mathbb{N}}
\newcommand{\RR}{\mathbb{R}}
\def\ifpdf\input{#.pdf_t}\else\input{#.pstex_t}\fi1{\ifpdf\input{#1.pdf_t}\else\input{#1.pstex_t}\fi}
\newcommand{\rmu}{\mathrm{\mathbf{u}}}
\newcommand{\rmv}{\mathrm{\mathbf{v}}}
\newcommand{\rmw}{\mathrm{\mathbf{w}}}
\newcommand{\rmx}{\mathrm{\mathbf{x}}}
\newcommand{\rmy}{\mathrm{\mathbf{y}}}
\newcommand{\rmz}{\mathrm{\mathbf{z}}}
\newcommand{\rme}{\mathrm{\mathbf{e}}}
\newcommand{\set}[2]{\{\,#1\,|\,#2\,\}}
\newcommand{\setof}[1]{\{\,#1\,\}}
\newlength{\first}\newlength{\second}
\newcommand{\smidge}{{\kern .05em}}
\newlength{\subjtolt}\setlength{\subjtolt}{5em}
\newlength{\saveparindent}
\def\bproof{\begin{rm}\protect\vspace{5pt}\noindent{\bf Proof: }%
\addtolength{\parskip}{4pt}\setlength{\parindent}{0pt}}
\def\eproof{\end{rm}\addtolength{\parskip}{-4pt}%
\setlength{\parindent}{\saveparindent}}
\newcommand{\bprooff}[1]{\begin{rm}\protect\vspace{5pt}%
\noindent{\bf Proof of #1: }\addtolength{\parskip}{4pt}%
\setlength{\parindent}{0pt}}
\newenvironment{prooff}[1]{\par\bprooff{#1}}{\eproof\qed\par}
\begin{document}
\title{Counting perfect matchings of cubic graphs in the geometric dual}

\author{Andrea Jim\'enez\thanks{
  Depto.~Ing.~Matem\'{a}tica, U.~Chile.
  Web: \texttt{www.dim.uchile.cl/$\sim$ajimenez}.
  Gratefully acknowledges the support of
    MECESUP UCH0607, and CONICYT via Basal in Applied Mathematics and
    FONDECYT 1090227.}
  \and 
  Marcos Kiwi\thanks{
  Depto.~Ing.~Matem\'{a}tica \&
  Ctr.~Modelamiento Matem\'atico UMI 2807, U.~Chile.
  Web: \texttt{www.dim.uchile.cl/$\sim$mkiwi}.
  Gratefully acknowledges the support of
    CONICYT via Basali n Applied Mathematics and
    FONDECYT 1090227.}
}

\maketitle
\begin{abstract}
Lov\'asz and Plummer conjectured, in the mid 1970's, that
  every cubic graph $G$ with no cutedge has an exponential 
  in $|V(G)|$ number of perfect matchings.
In this work we show that every cubic planar graph
  $G$ whose geometric dual graph is a stack
  triangulation has at least $3\varphi^{|V(G)|/72}$ 
  distinct perfect matchings, where $\varphi$ is the golden ratio.
Our work builds on a novel approach relating Lov\'asz and Plummer's 
  conjecture and the number of so called groundstates of the widely
  studied Ising model from statistical physics.
\end{abstract}

\section{Introduction}\label{sec:intro}
A graph is said to be \emph{cubic} if each vertex has degree $3$
and \emph{bridgeless} if it contains no cutedges. 
As early as in 1891 Petersen proved
  that \emph{every cubic bridgeless graph has a perfect matching.}
Nowadays, this famous theorem is obtained 
  indirectly using major results such as Hall's Theorem 
  from 1935 and Tutte's 1-factor Theorem from 1947.

In the mid-1970's, Lov\'asz and Plummer asserted that 
  \emph{for every cubic bridgeless graph with $n$ vertices, 
  the number of perfect matchings is exponential in $n$.}
The best result known is a superlinear lower bound by
Esperet, Kardos and Kr\'al'~\cite{EKK10}.

The conjecture remains open despite considerable attempts to prove.
So far, there are three classes of cubic graphs 
  for which the conjecture has been proved.
For bipartite graphs, the assertion 
  was shown by Voorhoeve~\cite{V} who proved:
  \emph{Every cubic bipartite graph with $n$ vertices has at least 
  $6(4/3)^{\frac{n}{2} -3 }$ perfect matchings.}
This result was later extended to $k-$regular bipartite graphs by 
  Schrijver~\cite{S}.
The conjecture was positively solved for the class of planar graphs
  by Chudnovsky and Seymour~\cite{CS} who showed:
  \emph{Every cubic bridgeless planar graph with $n$ vertices has 
  at least $2^{cn}$ perfect matchings, where $c=1/655978752$.}
Oum~\cite{O} recently established the conjecture for the 
  class of claw-free cubic graphs: 
  \emph{Every claw-free cubic bridgeless graph with $n$ vertices has 
  at least $2^{n/12}$ perfect matchings.}

In what follows, we restrict to the class of planar graphs. 
  We suggest to study the conjecture of Lov\'asz and Plummer in the dual
  setting. This relates the conjecture to a phenomenon well known in
  statistical physics, namely to the degeneracy of the Ising model on
  totally frustrated triangulations of the plane.

A planar graph is a \emph{triangulation} 
  if each face is bounded by a cycle of length $3$. 
Note that the dual graph~$G^*$ of a cubic bridgeless 
  planar graph $G$ is a triangulation. 
A set $M$ of edges of a triangulation~$\Delta$ is 
  \emph{intersecting} if $M$ contains exactly one edge of 
  each face of $\Delta$. Clearly, $M$ is an intersecting
  set of~$G^*$ if and only if  $M$ is a perfect matching
  of $G$. Now, with the previous definitions, 
  we can reformulate the conjecture of Lov\'asz and 
  Plummer for the class of planar graphs as follows: \emph{Each planar
  triangulation has an exponential number of intersecting sets of
  edges}.

Next, let us consider the Ising model. 
  Given a triangulation $\Delta=(V,E)$ we associate the 
  \emph{coupling constant} 
  $c(e)=-1$ with each edge $e\in E$. 
For any $W \subseteq V$, a \emph{spin assignment} of $W$
  is any function $\texttt{s}: W \,
  \rightarrow \, \{1,-1\}$ and $1$, $-1$
  are called \emph{spins}.
A \emph{state} of $\Delta$ 
  is any spin assignment of $V$.
The energy of a state $\texttt{s}$ 
  is defined as $-\sum_{e=\{u,v\}\in E}c(e)\texttt{s}(u)\texttt{s}(v)$.
The states of minimum energy are called \emph{groundstates}.
The number of groundstates is usually called the \emph{degeneracy} of
  $\Delta$, denoted $g(\Delta)$, and it is an extensively studied 
  quantity (for regular lattices) in statistical physics. 
Given a state $\texttt{s}$ of $\Delta$ we say that
  edge $\{u,v\}$ is \emph{frustrated} by $\texttt{s}$
  or that $\texttt{s}$ \emph{frustrates} edge $\{u,v\}$
  if $\texttt{s}(u)=\texttt{s}(v)$. 
Clearly, each state frustrates at least one edge of each
  face of $\Delta$.
A state is a groundstate if it 
  frustrates the smallest possible number of edges. 

We say that a state $\texttt{s}$ \emph{is satisfying for} 
  a face $f$ of a planar triangulation~$\Delta$,
  if there is exactly one edge $e=\{u_1,u_2\}$ 
  in the boundary of $f$ that is \emph{frustrated} by $\texttt{s}$.
Moreover, we say that~$\texttt{s}$ is a 
  \emph{satisfying state} 
  of $\Delta$ if $\texttt{s}$ is satisfying for every inner face 
  $f$ of $\Delta$.
Clearly, the set of edges frustrated
  by a satisfying state which is also 
  satisfying for the outer face is an 
  intersecting set.
Hence, the number of satisfying
  states which are also satisfying for the 
  outer face, is at most twice the number of 
  intersecting sets of edges.
The converse also holds: if we
  delete an intersecting set of edges from a planar triangulation, 
  then we get a bipartite graph and its bipartition determines a 
  satisfying spin assignment which is also satisfying
  for the outer face.
Given that any planar triangulation $\Delta$ has an intersecting set, 
  (induced by a perfect matching in its dual), it follows that 
  $\texttt{s}$ is a satisfying
  state  of $\Delta$ which is also satisfying for its 
  outer face if and only if $\texttt{s}$ is a groundstate of~$\Delta$.
Summarizing, the degeneracy is twice 
  the number of intersecting sets. 
Hence, Chudnovsky and Seymour's result can be reformulated as follows:
  \emph{Each planar triangulation has an exponential (in the 
  number of vertices) degeneracy}.
This motivated Jim\'enez, Kiwi and Loebl~\cite{JKL} to 
  consider the problem of lower bounding the degeneracy of 
  triangulations of an $n$-gon, as well as the use of the (transfer matrix)
  method for achieving their goal.
Since the dual of triangulations of $n$-gons are seldom cubic graphs, the 
  results of~\cite{JKL} do not directly relate to Lov\'asz and
  Plummer's conjecture, not even for a subfamily of cubic graphs.
In this article, we further develop the approach proposed 
  in~\cite{JKL} and establish the feasibility of using it to attack
  Lov\'asz and Plummer's conjecture for a non-trivial subclass of 
  cubic graphs.
More precisely, the subclass of cubic bridgeless planar graphs whose 
  geometric dual are stack triangulations (also called 
  $3$-trees~\cite[page~167]{BBS}). 
Specifically, provided $\varphi=(1+\sqrt{5})/2\approx1.6180$ denotes
  the golden ratio, we establish the following:
\begin{theorem}\label{theo:main}
The degeneracy of any stack triangulation 
  $\Delta$ with $|\Delta|$ vertices is at 
  least $6\varphi^{(|\Delta|+3)/36}$. 
\end{theorem}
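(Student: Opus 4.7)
The proof plan proceeds by induction on the number of vertices, exploiting the recursive construction of stack triangulations. Each stack triangulation $\Delta$ with $n\geq 4$ vertices is obtained from a smaller one by stacking a new vertex $v$ inside a triangular face $F=abc$, producing three sub-triangles which become the outer faces of three sub-stack-triangulations $S_1,S_2,S_3$. For any (sub-)stack-triangulation $S$ with outer triangle $abc$ and outer edges $(e_1,e_2,e_3)=(ab,bc,ca)$, I would track the \emph{boundary count}
\[
V(S;\mu_1,\mu_2,\mu_3) \,=\, \#\{\,M\subseteq E(S)\,:\, \text{every inner face of } S \text{ has exactly one edge in } M,\ \ e_i\in M \Leftrightarrow \mu_i=1\,\}.
\]
A simple double-count (each outer edge lies in one inner face, each inner edge in two, and the number of inner faces is the odd integer $2|S|-5$) shows that $V(S;\mu_1,\mu_2,\mu_3)=0$ unless $\mu_1+\mu_2+\mu_3$ is odd, so $V(S)$ has at most four nonzero entries, at $(1,0,0)$, $(0,1,0)$, $(0,0,1)$, and $(1,1,1)$. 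The number of intersecting sets of $\Delta$ is $V(\Delta;1,0,0)+V(\Delta;0,1,0)+V(\Delta;0,0,1)$, and the degeneracy is twice that. A local case analysis at $v$ yields the trilinear recursion
\[
V(S;\alpha,\beta,\gamma) \,=\, \sum_{\epsilon_a,\epsilon_b,\epsilon_c\in\{0,1\}} V(S_1;\alpha,\epsilon_b,\epsilon_a)\cdot V(S_2;\beta,\epsilon_c,\epsilon_b)\cdot V(S_3;\gamma,\epsilon_a,\epsilon_c),
\]
where $\epsilon_x$ indicates whether edge $vx$ is selected, with base case $V(K_3;\mu_1,\mu_2,\mu_3)=1$ if and only if exactly one $\mu_i$ equals 1.

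To calibrate the growth rate, I would analyze two prototypical stacking patterns. In the \emph{caterpillar} pattern (always stack inside one fixed sub-face, leaving the other two sub-triangulations trivial $K_3$), the trilinear recursion collapses to a linear map preserving the two-dimensional subspace of nonzero entries; the induced recurrence $(a_{k+1},c_{k+1})=(a_k+c_k,\,a_k)$ is the Fibonacci shift, giving $V_k(S;1,0,0)=F_{k+1}$ and growth at rate $\varphi$ per stacked vertex. In the \emph{balanced} pattern (all three sub-triangulations identical of depth $d$), induction yields $V^{(d)}(S;1,0,0)=2^{(|S|-4)/3}$, i.e.\ growth at rate $\varphi^{(\log_{\varphi} 2)/3}\approx\varphi^{0.48}$ per vertex. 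Both patterns far exceed the required $\varphi^{1/36}$ per vertex.

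The main obstacle is a uniform treatment of arbitrary mixed stacking patterns. My plan is to introduce a potential function $\Phi(S)$ --- a carefully chosen (possibly weighted) linear combination of the four effective entries of $V(S)$ --- and to prove by induction the bound $\Phi(S)\geq C\,\varphi^{(|S|+3)/36}$. The inductive step requires a composition inequality expressing $\Phi(S)$ in terms of $\Phi(S_1),\Phi(S_2),\Phi(S_3)$ via the trilinear recursion, with enough multiplicative slack to absorb a factor of $\varphi^{1/36}$ per vertex added. Verifying this inequality reduces to an exhaustive, finite case analysis over the patterns of nonzero entries that the boundary vectors $V(S_i)$ can exhibit (leaf $K_3$, or internal with various relative magnitudes of the four nonzero entries). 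After checking a finite set of small-$n$ base cases by direct computation, the induction yields $g(\Delta)\geq 6\varphi^{(|\Delta|+3)/36}$ as desired.
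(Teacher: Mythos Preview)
Your framework coincides with the paper's: the boundary vector $V(S;\cdot)$ is, after the spin/frustrated-edge correspondence, exactly the degeneracy vector $\rmv_{\Delta}\in\RR^4$, your parity observation matches the index set $I=\{+++,++-,+-+,-++\}$, and your trilinear recursion is Proposition~\ref{prop:degvect}. The caterpillar and balanced calculations are correct sanity checks.

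The gap is the final step. You propose an unspecified \emph{linear} potential $\Phi(S)=\sum_\mu c_\mu V(S;\mu)$ and assert that the composition inequality reduces to a finite case analysis over ``patterns of nonzero entries'' and ``relative magnitudes''. But the four entries of $V(S_i)$ take values in a continuum, so there is no finite list of cases unless you discretize; and a linear $\Phi$ interacts poorly with a trilinear recursion --- $\Phi(S)$ is trilinear in the entries of the $V(S_i)$ and need not dominate any fixed multiple of $\Phi(S_1)\Phi(S_2)\Phi(S_3)$. The paper's solution is different in kind: it tracks a \emph{componentwise} lower bound of the form $(\varphi^{e_0},\varphi^{e_1},\varphi^{e_2},\varphi^{e_3})$ with integer exponents $e_s\in\NN$, and uses the additive potential $\Psi(\rme)=2(e_1+e_2+e_3)$ on the exponents; the golden-ratio identity $1+\varphi=\varphi^2$ is what makes the single-child step (Lemma~\ref{lem:strip}) a genuinely finite case split on the integers $e_s$. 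Even with this discretization, direct induction still fails because of small dangling subtrees (a leaf or a size-$2$ branch hanging next to a large sibling contributes no growth to $\Psi$). The paper handles this by first deleting ``remainders'' (Definition~\ref{def:remain}, Lemmas~\ref{inters}--\ref{lemma:b}), losing at most a factor of $3$ in tree size, and then proving the Main Lemma (Lemma~\ref{exp:sat}) with a further slack of a short path of length $\leq 5$ at the root. These two devices --- exponent-based discretization and remainder-removal preprocessing --- are the technical heart of the argument and are not present in your outline; without them the ``finite case analysis'' you invoke does not materialize.
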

As a rather direct consequence of the preceding theorem we obtain the 
  following result.
\begin{corollary}\label{coro:main} 
The number of perfect matchings of a cubic 
  graph $G$ whose dual graph is a 
  stack triangulation is at least $3\varphi^{|V(G)|/72}$.
\end{corollary}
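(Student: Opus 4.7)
The plan is to derive Corollary~\ref{coro:main} directly from Theorem~\ref{theo:main} together with the primal/dual dictionary developed in the introduction. Recall that intersecting edge-sets of the triangulation $\Delta=G^*$ are precisely perfect matchings of $G$, and the discussion preceding Theorem~\ref{theo:main} shows that the degeneracy $g(\Delta)$ equals exactly twice the number of such intersecting sets (a satisfying state and its negation both frustrate the same edge set). Consequently, once $|\Delta|$ is expressed in terms of $n=|V(G)|$ and the lower bound of Theorem~\ref{theo:main} is invoked, the corollary will fall out after halving and a short simplification.

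The first step is to translate vertex counts between $G$ and $\Delta$. Since $G$ is cubic and planar, the handshake lemma gives $|E(G)|=3n/2$, and Euler's formula then yields
\[
|\Delta|\;=\;|F(G)|\;=\;2+|E(G)|-|V(G)|\;=\;\tfrac{n}{2}+2.
\]
Feeding this into Theorem~\ref{theo:main}, the degeneracy of $\Delta$ satisfies
\[
g(\Delta)\;\geq\;6\varphi^{(|\Delta|+3)/36}\;=\;6\varphi^{(n/2+5)/36}\;=\;6\varphi^{n/72}\cdot\varphi^{5/36}.
\]
Dividing by $2$ converts degeneracy into the number of intersecting sets of $\Delta$, which by the dictionary above equals the number of perfect matchings of $G$. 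Using $\varphi>1$ so that $\varphi^{5/36}\geq 1$, one concludes the desired lower bound of $3\varphi^{n/72}$.

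I do not foresee any genuine obstacle at this stage: the corollary is a one-line consequence of Theorem~\ref{theo:main} once the two ingredients (Euler's formula to pin down $|\Delta|$, and the degeneracy-vs-matchings identity) are combined. The substantive work all lies in Theorem~\ref{theo:main} itself, where the stack-triangulation hypothesis and the Fibonacci-type recursion hidden in the $\varphi$ factor must be exploited; the corollary merely repackages that bound on the dual side.
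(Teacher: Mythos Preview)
Your proof is correct and follows exactly the same route as the paper: invoke the identity ``degeneracy $=2\times$ number of perfect matchings,'' relate $|\Delta|$ to $|V(G)|$ via Euler's formula, and apply Theorem~\ref{theo:main}. (Your Euler computation $|\Delta|=n/2+2$ is in fact the right one; the paper's printed relation $2|\Delta|=|G|-4$ has a sign slip, though either version still yields the stated bound $3\varphi^{|V(G)|/72}$.)
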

Note that the preceding result applies to a subclass of graphs
  for which Chudnovsky and Seymour's~\cite{CS} work already 
  establishes the validity of Lov\'asz and Plummer's conjecture,
  albeit for a smaller rate of exponential growth and arguably
  by more complicated and involved arguments. 
We believe that the main relevance of this work is that it validates
  the feasibility of the alternative approach proposed in~\cite{JKL}
  for approaching Lov\'asz and Plummer's conjecture. 

\subsection{Organization}
The paper is organized as follows. 
We provide some mathematical background in Section~\ref{sec:prelim}.
Then, in Section~\ref{section:bijection}, 
  we describe a bijection between rooted stack triangulations
  and colored rooted ternary trees --- this bijection
  allows us to work with ternary trees instead of triangulations.
In Section~\ref{section:transfer}, we first introduce the concept
  of degeneracy vector in stack triangulations.
This vector satisfies that the sum of its coordinates is the number of 
  satisfying states of the stack triangulation.
We also introduce the concept of root vector of a ternary trees and show 
  that via the aforementioned bijection, the degeneracy vector of a 
  stack triangulation $\Delta$ is the same as the root vector of 
  the associated colored rooted ternary tree.
In Section~\ref{section:main}, we adapt to our setting the 
  transfer matrix method as used in statistical physics in the 
  study of the Ising Model.
Some essential results are also established.
In Section~\ref{sec:results}, we prove the main results
  of this work.
In Section~\ref{sec:final-comments}, we conclude with a 
  brief discussion and comments about possible future research 
  directions.

\section{Preliminaries}\label{sec:prelim}
We now introduce the main concepts and notation 
  used throughout this work.

\subsection{Stack triangulations}
Let $\Delta_0$ be a triangle. 
For $i \geq 1$, let $\Delta_{i}$ be the plane triangulation obtained 
  by applying the following \emph{growing rule} to $\Delta_{i-1}$.
\begin{quote}
\textbf{growing rule:} Given a plane triangulation $\Delta$,
\begin{enumerate}
\item %\textbf{Step 1}: 
  \label{item:step1}
  Choose an inner face $f$ from $\Delta$,
\item %\textbf{Step 2}: 
  Insert a new vertex $u$ at the interior of $f$.
\item %\textbf{Step 3}: 
  Connect the new vertex $u$ to each vertex of the 
  boundary of $f$.
\end{enumerate}
\end{quote}
Clearly, the number of vertices of $\Delta_{n}$ is $n+3$.
The collection of $\Delta_{n}$'s thus obtained are called 
  \emph{stack triangulation}. 
Among others, the set 
  of stack triangulations coincides with
  the set of plane triangulations having a unique 
  Schnyder Wood (see~\cite{FZ})
  and is the same as the collection of planar $3$-trees 
  (see~\cite[page~167]{BBS}).

Consider now a stack triangulation $\Delta_1$ 
  and for $i \geq 2$, let $\Delta_{i}$ 
  be the plane triangulation obtained by applying the growing rule 
  to $\Delta_{i-1}$ restricting Step~\ref{item:step1} so the face chosen
  is one of the three new faces obtained by the application
  of the growing rule to $\Delta_{i-2}$. 
For $n \geq 1$, we say that $\Delta_{n}$ is a 
  \emph{stack-strip triangulation} 
  (for an example see Figure~\ref{fig:top}).
Clearly, stack-strip triangulations are a subclass of stack triangulations. 

\begin{figure}[h]
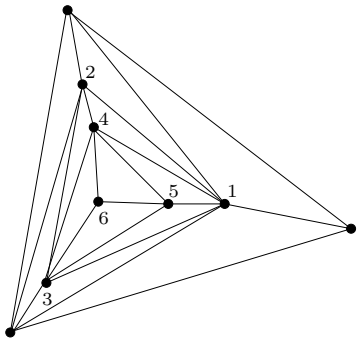

\centering
\ifpdf\input{stack_strip.pdf_t}\else\input{stack_strip.pstex_t}\fi
\caption{Example of stack-strip triangulation (numbers correspond
  to the order in which nodes are added by the growing rule).}\label{fig:top}
\end{figure}

Let $\Delta_n$ be a stack triangulation with $n\geq0$ and $\Delta_0$
  be the starting plane triangle in its construction.  
If we prescribe the counterclockwise orientation to any edge of 
  $\Delta_0$, we say that $\Delta_n$ is a rooted stack triangulation 
  (see Figure~\ref{rooted_stack}).
\begin{figure}[h]
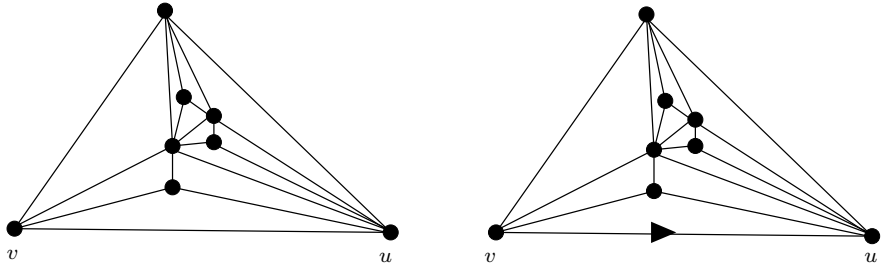

\centering
\ifpdf\input{rooted_stack.pdf_t}\else\input{rooted_stack.pstex_t}\fi
\caption{
A stack triangulation (left) and the rooted stack triangulation 
  obtained by prescribing the counterclockwise orientation to the edge 
  $\{v,u\}$ (right).}\label{rooted_stack}
\end{figure}

\subsection{Ternary trees}
A \emph{rooted tree} is a tree $T$ with a special 
  vertex $v\in V(T)$ designated to be the root. 
If $v$ is the root of~$T$, we denote~$T$ by~$T_v$. 
A \emph{rooted ternary tree} is a rooted tree~$T_v$ such that all its 
  vertices have at most three children. 
{From} now on, let $X$ be an arbitrary set with three elements. 
We say that a rooted ternary tree~$T_v$ is \emph{colored by $X$} 
  (or simply \emph{colored}) if; (1) each non-root vertex is labeled
  by an element of $X$, and~(2) for every vertex of $V(T)$ 
  all its children have different labels.

\section{From stack triangulations to ternary trees}\label{section:bijection}
It is well known that stack triangulations are in bijection with 
  ternary trees (see~\cite{MA}). 
For our purposes,
  the usual bijection is not enough (we need a more 
  precise handle on the way in which triangular faces touch each other). 
The main goal of this section 
  is to precisely describe a one-to-one correspondence better suited 
  for our purposes.

\subsection{Bijection}\label{labprocedure}
Let $\Delta_n$ be a rooted stack triangulation with $n\geq 1$ and 
  $\Delta_0$ be the starting plane triangle in its construction. 
We will show how to construct a 
  colored rooted ternary tree $T(\Delta_n)$
  which will be in bijective correspondence with $\Delta_n$.  

Throughout this section, the following concept will be useful.
\begin{definition} 
Let $\Delta$ be a rooted stack triangulation. 
Let $\tilde{\Delta}$ be the rooted stack triangulation obtained by 
  prescribing the counterclockwise orientation to exactly one edge of 
  each inner face of $\Delta$. 
We refer to $\tilde{\Delta}$ as an auxiliary stack triangulation of $\Delta$.
\end{definition}
Note that in an auxiliary stack triangulation of $\Delta$, we allow inner
  faces of $\Delta$ to have edges oriented clockwise as long
  as exactly one of its edges is oriented counterclockwise.
It is also allowed to have edges with both orientations.

We now, describe the key procedure in the construction of $T(\Delta_n)$.
For $i \in \{1,\ldots,n\}$, let $f_{i}$, $u_{i}$ and
  $\Delta_i$, denote the chosen face, the new vertex and the output
  corresponding to the $i$-th application of the growing rule in the
  construction of $\Delta_n$.
The procedure recursively constructs an auxiliary
  stack triangulations~$\tilde{\Delta}_i$ of~$\Delta_i$.
Initially, $i=1$ and $\tilde{\Delta}_0$ is $\Delta_0$ with one 
  of its edges oriented counterclockwise. 
\begin{quote}
\textbf{Labeling procedure:} 
%Let $\tilde{\Delta}$ be a rooted stack
%triangulation such that each inner face of $\tilde{\Delta}$ has
%exactly one of its three edges with a prescribed counterclockwise 
%orientation.

\textbf{Step 1:}
%To apply the growing rule to $\tilde{\Delta}$. 
%Let $f$ be the inner face of $\tilde{\Delta}$ chosen in the first step of the
%  growing rule and $v$ the new vertex added. 
Let $\vec{e}_{f_i}$ be the counterclockwise oriented edge of $f_i$.  
The orientation of $\vec{e}_{f_i}$ induces a counterclockwise ordering of 
  the three new faces around $u_i$ starting by the face that 
  contains~$\vec{e}_{f_i}$, say $f_i(1)$. 
Let $f_i(2)$ and $f_i(3)$ denote the second and third new faces
  according to the induced order. 
For each $j \in \{1,2,3\}$, we say that $f(j)$ is in position $j$ or that 
  $j$ is the position of $f_i(j)$. (See Figure \ref{rule:bij}.)

\textbf{Step 2}: For each $j \in \{2,3\}$, take the unique edge
  $e_{f_i}(j)$ in $E(f_i) \cap E(f_i({j}))$ and prescribe the counterclockwise
  orientation to this edge (see~Figure~\ref{rule:bij}). 
For all other faces of $\Delta_{i}$ not contained in $f_i$, 
  keep the same counterclockwise oriented edge. 
(Observe that for each $j \in \{1,2,3\}$, the triangle $f_i({j})$ has a 
  prescribed counterclockwise orientation in one of its three edges. 
Moreover, note that $\vec{e}_{f_i}=\vec{e}_{f_i}({1})$.) 
\end{quote}
\begin{figure}[h]
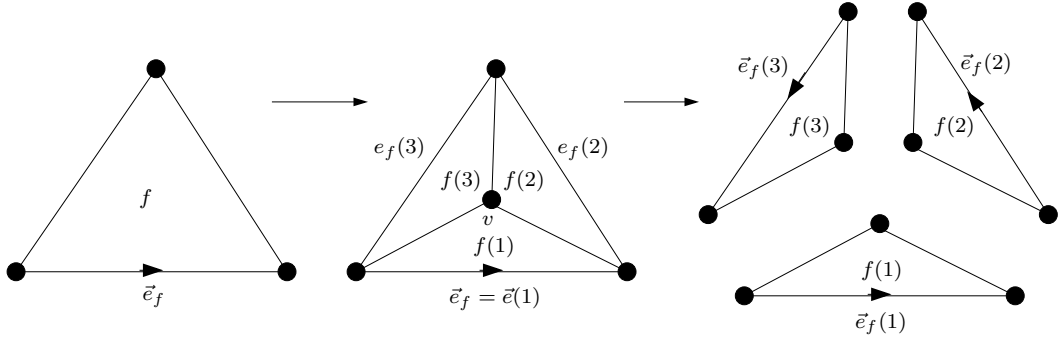

\centering
\ifpdf\input{rule1_bij.pdf_t}\else\input{rule1_bij.pstex_t}\fi
\caption{Labeling procedure. Left to center, step~1. Center to right, step~2.}\label{rule:bij}
\end{figure}
%Let $\Delta_0=\tilde{\Delta}_0$. 
%For $i \in \{1,\ldots,n\}$ let $\tilde{\Delta}_{i}$ be the auxiliar 
%  stack triangulation of $\Delta_i$ obtained by applying the labeling 
%  procedure to $\tilde{\Delta}_{i-1}$.
%Let $f_i$ denote the choosen inner face of $\tilde{\Delta}_{i-1}$ and 
%  $v_i$ denote the new vertex inserted inside $f_i$. 
%Let $f_i(1), f_i(2),f_i(3)$ denote the ordered faces obtained in the 
%  $i-$th application of the labeling procedure in the construction

The set 
  $\Theta_{{\Delta}_n}
     =\{(f_{i}, u_{i}, f_{i}(1), f_{i}(2), f_{i}(3))\}_{i\in\{1,\ldots,n\}}$ 
  will be henceforth referred to as 
  the \emph{growth history of $\Delta_n$}.
Note that, for $j\in\{1,2,3\}$, each face $f_{1}(j)$  
  together with its oriented edge induce 
  a rooted stack triangulation, henceforth denoted 
  $\Delta_{n}^{j}$, on the vertices of $\Delta_n$
  that lie on the boundary and interior of $f_{1}(j)$.

We are ready to describe $T(\Delta_{n})$ 
  in terms of the growth history of $\Delta_n$:
\begin{quote}
\textbf{Combinatorial description of $T(\Delta_{n})$:} 
Let $X=\{1,2,3\}$. 
Let $V(T(\Delta_n))= \{u_{1},\ldots,u_{n}\}$. 
Let $u_1$ be the root of $T(\Delta)$.  
For $i \in\{2,\ldots,n\}$, $u_i$ is a child of vertex $u_j$ 
%  with $j<i$ 
  if there is a $k \in\{1,2,3\}$ such that 
  $f_{i}=f_{j}(k)$. 
The label of $u_i$ is $k$.
For an example see Figure~\ref{bijection}.
\end{quote}
\begin{figure}[h]
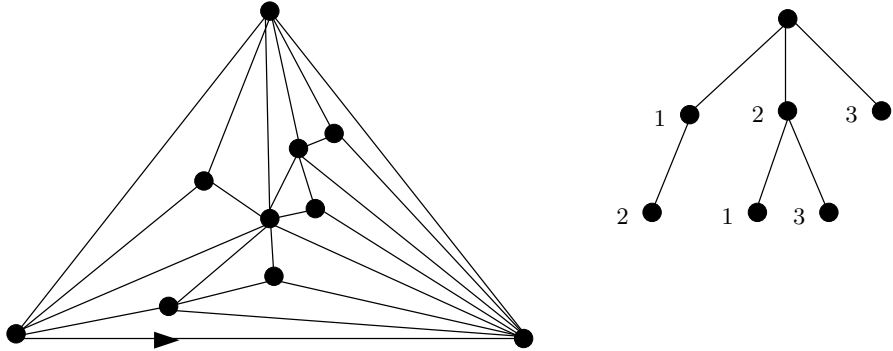

\centering
\ifpdf\input{bijection.pdf_t}\else\input{bijection.pstex_t}\fi
\caption{Example of the bijection between rooted stack triangulations 
  and colored rooted ternary trees.}\label{bijection}
\end{figure}

In particular, we have proved the following result.
\begin{proposition}\label{prop:bij}
Let $\Delta_n$ be a rooted stack triangulation. 
The colored ternary tree $T(\Delta_n)$ rooted on $v$, satisfies 
  the following statements:
\begin{enumerate}
\item \label{item:bij1}
If $\Delta_{n}^{i}$ has 3 vertices for all $i \in \{1,2,3\}$, 
  then $T(\Delta_n)$ has exactly one vertex $v$ (its root).
\item \label{item:bij2}
If there are $i,j \in \{1,2,3\}$ with $i\neq j$ such that 
	$\Delta_{n}^{i}$ and $\Delta_{n}^{j}$ have 3 vertices 
	and $\Delta_{n}^{k}$ with $k \in \{1,2,3\} \setminus \{i,j\}$ 
	has at least 4 vertices, then the root $v$ has exactly one 
	child $w$ labeled by $k$. Moreover, the root of $T(\Delta_{n}^{k})$ 
	is $w$, where $T(\Delta_{n}^{k})$ is the colored 
	sub-ternary tree of $T(\Delta_n)$ induced by $w$ 
	and its descendants.  

\item  \label{item:bij3}
If there is an $i\in \{1,2,3\}$ such that $\Delta_{n}^{i}$ 
	has 3 vertices and $j, k \in \{1,2,3\} \setminus \{i\}$ 
	with $j\neq k$  such that $\Delta_{n}^{j}$ and $\Delta_{n}^{k}$ 
	have at least 4 vertices, then the root $v$ has exactly two 
	children $w_j$ and $w_k$ labeled by $j$ and $k$, respectively.
	Moreover, for every $t \in \{j,k\}$, the root of $T(\Delta_{n}^{t})$ 
	is $w_t$, where $T(\Delta_{n}^{t})$ is the colored 
	sub-ternary tree of $T(\Delta_n)$ induced by $w_t$ 
	and its descendants.  

\item   \label{item:bij4}
If $\Delta_{n}^{i}$, $i \in \{1,2,3\}$,  has at 
	least 4 vertices, then the root $v$ has three children
	$w_1,w_2$ and $w_3$ labeled by $1, 2$ and $3$, respectively.
	Moreover, for every $i\in \{1,2,3\}$, the root of $T(\Delta_{n}^{i})$ 
	is $w_i$, where $T(\Delta_{n}^{i})$ is the colored 
	sub-ternary tree of $T(\Delta_n)$ induced by $w_i$ 
	and its descendants.        
\end{enumerate}
\end{proposition}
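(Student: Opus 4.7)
The plan is to read off the structure of the root $v = u_1$ of $T(\Delta_n)$ directly from the combinatorial description of $T(\Delta_n)$, and then argue recursively that each subtree hanging off $v$ is precisely $T(\Delta_n^k)$ for the appropriate $k$. The four cases of the statement correspond to the four possible subsets of $\{1,2,3\}$ that arise as label-sets of children of $v$, so the statement really encodes a single claim: for each $k\in\{1,2,3\}$, the vertex $v$ has a child labeled $k$ if and only if $\Delta_n^k$ has at least 4 vertices, and in that case the subtree of $T(\Delta_n)$ rooted at that child equals $T(\Delta_n^k)$.

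First I would establish the ``if and only if'' for the existence of a child labeled $k$. By the combinatorial description, $u_i$ (with $i\ge 2$) is a child of $u_1$ exactly when $f_i=f_1(k)$ for some $k\in\{1,2,3\}$, and its label is that $k$. Since each of the three faces $f_1(1),f_1(2),f_1(3)$ is destroyed as soon as it is picked by the growing rule, there is at most one child of $v$ carrying any given label $k$. Moreover, $f_1(k)$ is picked at some later step if and only if the growing rule is applied at least once inside the closed region bounded by $f_1(k)$, which is precisely the condition that $\Delta_n^k$ has strictly more than 3 vertices. This already delivers the correct structure at the root in each of the four cases.

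Next I would verify the recursion, namely that for each label $k$ that does appear, the subtree of $T(\Delta_n)$ induced by the corresponding child $w_k$ and its descendants coincides with $T(\Delta_n^k)$. The key observation is that the labeling procedure is local: the counterclockwise-oriented edge $\vec{e}_{f_1(k)}$ prescribed by Step~2 (namely $e_{f_1}(k)$ for $k\in\{2,3\}$, and $\vec{e}_{f_1}$ itself for $k=1$) is exactly an edge of $f_1(k)$, so it makes $\Delta_n^k$ into a rooted stack triangulation in its own right. The sub-sequence of applications of the growing rule that build $\Delta_n^k$, together with their locally induced orderings of the three new faces around each newly inserted vertex, are identical to those one would perform if one were running the labeling procedure on $\Delta_n^k$ from scratch, since the procedure only refers to the counterclockwise-oriented edge of the face being subdivided. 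Hence the children, labels, and (inductively) subtrees produced for $w_k$ inside $T(\Delta_n)$ agree with those produced for the root of $T(\Delta_n^k)$.

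Combining these two observations gives all four items of the proposition, and a straightforward induction on $n$ (with the base case $\Delta_n=\Delta_0$ giving the degenerate empty-tree situation and the one-step case giving item~\ref{item:bij1}) makes the recursion rigorous. The only place that requires genuine care, rather than bookkeeping, is checking that the edge prescribed as counterclockwise in Step~2 really is an edge of $f_1(k)$ and that it agrees with the counterclockwise ordering used when $\Delta_n^k$ is viewed as a standalone rooted stack triangulation; this is the part where I would be most careful, since the whole ``one-to-one'' nature of the correspondence hinges on this compatibility between the local and global orientations.
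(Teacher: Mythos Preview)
Your proposal is correct and follows the same approach as the paper: in fact, the paper does not give a separate proof of this proposition at all, but simply states ``In particular, we have proved the following result'' immediately after the combinatorial description of $T(\Delta_n)$, treating it as a direct read-off from the labeling procedure and the growth history. Your write-up is essentially a careful unpacking of exactly that implicit argument, with the same key points (a child labeled $k$ exists iff $f_1(k)$ is eventually subdivided, and the locality of the labeling procedure makes the subtree at $w_k$ coincide with $T(\Delta_n^k)$).
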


\section{Transfer Method} \label{section:transfer}
The main tool we use to carry out our work, is an adaptation of 
  a method (well known among physicist) called the transfer matrix method.
In~\cite{JKL}, we directly apply the transfer matrix method to obtain 
  the number of satisfying states of triangulations of a convex $n$-gon. 
In this work we develop the technique further by 
  considering transfer vectors instead 
  of transfer matrices.

\subsection{Methodology}
In general terms, our aim is to obtain for each stack
  triangulation $\Delta$ a vector $\rmv_{\Delta}$ in $\RR^{4}$ such
  that the sum of its coordinates equals twice the number of satisfying
  states of $\Delta$. 
We now elaborate on this.
Let $n \geq 1$ and $\Delta_{n}$ be a rooted stack triangulation. 
Let $\Delta_0 = (v_1,v_2,v_3)$ denote the starting triangle in the
  construction of $\Delta_{n}$ such that $\{v_1,v_2\}$ is the oriented
  edge with $v_1$ the tail and $v_2$ the head. 
We wish to construct a vector~$\rmv_{\Delta_n} \in \RR^{4}$ such 
  that its coordinates are indexed by the ordered set 
  $I=\{+++,++-,+-+,-++\}$.
For every $\phi \in I$, the $\phi$-th 
  coordinate of $\rmv_{\Delta_n}$, denoted $\Delta_n[\phi]$, is 
  defined as 
  the number of satisfying states of~$\Delta_{n}$ when the spin assignment
  of $(v_1,v_2,v_3)$ is equal to $\phi$.  
The vector $\rmv_{\Delta_n}$ will be called the 
  \emph{degeneracy vector} of $\Delta_n$. 
  In particular, $\rmv_{\Delta_0}=(0,1,1,1)^{t}$ is the degeneracy vector
 of a triangle.
Clearly, for every $\phi\in I$ we have the relation 
\begin{equation}\label{simm} 
\Delta_{n}[\phi]=\Delta_{n}[-\phi]
\end{equation}

Let $\Theta_{{\Delta}_n}
  =\{(f_{i}, u_{i}, f_{i}(1), f_{i}(2), f_{i}(3))\}_{i \in \{1,\ldots,n\}}$
  be the growth history of $\Delta_n$. 
Let $v$ denote $u_{1}$.   
Recall that $f_{1}(j)$ 
  induces a rooted stack triangulation $\Delta_{n}^{j}$ according to 
  the growth history of $\Delta_n$, 
  %which is rooted under the the first evolution
  %of the labeling procedure 
  (see %$\tilde{\Delta}_1$ in 
  Subsection~\ref{labprocedure}): 
  the oriented edge of 
  $\Delta_{n}^{1}$ is $\{v_1,v_2\}$ with $v_1$ its tail and $v_2$ its head; 
  the oriented edge of $\Delta_{n}^{2}$ is $\{v_2,v_3\}$ with $v_2$ its tail 
  and $v_3$ its head; and the oriented edge of 
  $\Delta^{3}_{n}$ is $\{v_3,v_1\}$ with $v_3$ its tail and 
  $v_1$ its head.

The following result shows how to express the degeneracy vector of $\Delta_{n}$
  in terms of the degeneracy vectors $\rmv_{\Delta_{n}^{1}}$,
  $\rmv_{\Delta_{n}^{2}}$, and $\rmv_{\Delta_{n}^{3}}$.
\begin{proposition} \label{prop:degvect}
For each $j \in \{1,2,3\}$, let 
  $\rmv_{\Delta_{n}^{j}} = (v_{j}^{k})_{k \in \{0,1,2,3\}}$. Then, 
\[
\rmv_{\Delta_{n}}=  \begin{pmatrix}
		v_{1}^{0}v_{2}^{0}v_{3}^{0} + v_{1}^{1}v_{2}^{1}v_{3}^{1}\\
		v_{1}^{0}v_{2}^{2}v_{3}^{3} + v_{1}^{1}v_{2}^{3}v_{3}^{2}\\
		v_{1}^{2}v_{2}^{3}v_{3}^{0} + v_{1}^{3}v_{2}^{2}v_{3}^{1}\\
		v_{1}^{2}v_{2}^{1}v_{3}^{3} + v_{1}^{3}v_{2}^{0}v_{3}^{2}
		\end{pmatrix}\,.
\]
\end{proposition}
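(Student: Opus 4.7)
The plan is to fix a spin assignment $\phi \in I$ to $(v_1,v_2,v_3)$, condition on the two possible spin values of the newly inserted vertex $v=u_1$, and argue that a satisfying state of $\Delta_n$ decomposes multiplicatively into satisfying states of the three sub‑triangulations $\Delta_n^{1},\Delta_n^{2},\Delta_n^{3}$ that meet at $v$.

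First I would observe that the set of inner faces of $\Delta_n$ is the disjoint union of the sets of inner faces of $\Delta_n^{1}$, $\Delta_n^{2}$, $\Delta_n^{3}$, since the three triangles $f_1(1),f_1(2),f_1(3)$ obtained by subdividing $\Delta_0$ at $v$ are precisely the bounding triangles of the three sub‑triangulations, and any further subdivision takes place strictly inside exactly one of them. Consequently, a spin assignment $\texttt{s}$ of $V(\Delta_n)$ is satisfying if and only if, for each $j\in\{1,2,3\}$, its restriction to $V(\Delta_n^{j})$ is a satisfying state of $\Delta_n^{j}$. The vertex sets of the $\Delta_n^{j}$ pairwise intersect only in $v$ and on the edges of $\Delta_0$, so fixing the spins of $v_1,v_2,v_3,v$ makes the three restricted satisfying‑state counts independent and the total count becomes a product.

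Next, I would read off the starting triangles of the three sub‑triangulations from the labeling procedure: $\Delta_n^{1}$ has oriented starting triangle $(v_1,v_2,v)$, $\Delta_n^{2}$ has $(v_2,v_3,v)$, and $\Delta_n^{3}$ has $(v_3,v_1,v)$. Therefore, if we split on the spin $s\in\{+,-\}$ of $v$, the number of satisfying states of $\Delta_n$ with $(v_1,v_2,v_3)=\phi$ equals
\[
\sum_{s\in\{+,-\}} \Delta_n^{1}[\phi_1\phi_2 s]\cdot\Delta_n^{2}[\phi_2\phi_3 s]\cdot\Delta_n^{3}[\phi_3\phi_1 s].
\]
For each of the four choices of $\phi\in I$ and each $s$, I would identify the resulting spin assignment of the starting triangle of $\Delta_n^{j}$ with one of the four indices in $I$, invoking the symmetry (\ref{simm}) whenever the triple lies in $-I$, so that each factor becomes one of the coordinates $v_j^{0},v_j^{1},v_j^{2},v_j^{3}$. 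Summing over $s\in\{+,-\}$ then produces the two monomials appearing in each coordinate of the claimed vector.

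The only real work lies in the bookkeeping of the eight $(\phi,s)$ cases and the signs under the symmetry relation; once these are tabulated, matching the eight monomials with the four entries of the stated column vector is mechanical, and this is where I would be most careful to avoid an off‑by‑one mismatch between the counterclockwise cyclic order $(v_1,v_2,v_3)$ inherited from $\Delta_0$ and the induced orientations of the starting edges of $\Delta_n^{2}$ and $\Delta_n^{3}$ prescribed by Step~2 of the labeling procedure.
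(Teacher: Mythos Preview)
Your proposal is correct and follows essentially the same approach as the paper: fix $\phi$, split on the spin of the central vertex $v$, use that the inner faces of $\Delta_n$ partition into those of the three sub-triangulations so the count factors, and then invoke the symmetry~(\ref{simm}) to rewrite each factor as a coordinate $v_j^k$. The paper's proof carries out the bookkeeping explicitly for $\phi\in\{+++, ++-\}$ and leaves the remaining two cases to the reader, exactly as you anticipate.
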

\begin{proof} 
Let $\phi\in I$.
Note that $\rmv_{\Delta_n}[\phi]$ equals the sum 
  of the number of satisfying states of~$\Delta_n$ when 
  $(v_1,v_2,v_3,v)$ are assigned spins $(\phi,+)$ and $(\phi,-)$.
For a given spin assignment to $(v_1,v_2,v_3,v)$, the number of 
  satisfying states of~$\Delta_n$, is obtained by 
  multiplying the number of satisfying states of each $\Delta_{n}^{i}$ 
  when the spin assignment of its outer faces agree with the fixed 
  spins assigned to $(v_1,v_2,v_3,v)$.

First, consider the case where $\phi=+++$.
If~$v$'s spin is $+$, then 
\[
  \Delta_{n}^{1}[+++]\cdot\Delta_{n}^{2}[+++]\cdot\Delta_{n}^{3}[+++] 
  = v_{1}^{0}v_{2}^{0}v_{3}^{0}\,.
\]
If $v$'s spin is $-$, then
\[
  \Delta_{n}^{1}[++-]\cdot\Delta_{n}^{2}[++-]\cdot\Delta_{n}^{3}[++-] 
  = v_{1}^{1}v_{2}^{1}v_{3}^{1}\,.
\]
Hence, $\Delta_{n}[\phi]=v_{1}^{0}v_{2}^{0}v_{3}^{0}+v_{1}^{1}v_{2}^{1}v_{3}^{1}$.

Now, consider the case where $\phi=++-$.
If $v$'s spin is $+$, then 
\[
 \Delta_{n}^{1}[+++]\cdot\Delta_{n}^{2}[+-+]\cdot\Delta_{n}^{3}[-++] 
  = v_{1}^{0}v_{2}^{2}v_{3}^{3}\,.
\]
Recalling that by identity~(\ref{simm})
  we have that $\Delta^{2}_{n}[+--]=\Delta^{2}_{n}[-++]$ 
  and $\Delta^{3}_{n}[-+-]=\Delta^{3}_{n}[+-+]$, 
  if $v$'s spin is $-$, then
\[
  \Delta_{n}^{1}[++-]\cdot\Delta_{n}^{2}[+--]\cdot\Delta_{n}^{3}[-+-] 
  = v_{1}^{1}v_{2}^{3}v_{3}^{2}\,.
\]
Hence, $\Delta_{n}[\phi]=v_{1}^{0}v_{2}^{2}v_{3}^{3}+v_{1}^{1}v_{2}^{3}v_{3}^{2}$.

The other two remaining cases, where $\phi$ equals 
  $+-+$ and $-++$, can be similarly dealt with and left to 
  the interested reader.
\end{proof}
%We keep the same notation in next subsection.

\subsection{Root vectors of ternary trees}
We will now introduce the concept of \emph{root vector} 
  of a colored rooted ternary tree. 
Then, we will see that~$\rmv_{\Delta}$ is the degeneracy vector of 
  the rooted stack triangulation $\Delta$ if and only if 
  $\rmv_{\Delta}$ is the root vector of the colored rooted 
  ternary tree $T(\Delta)$.

Let $T$ be a colored rooted ternary tree. 
For any node
  $u$ of $T\setminus \{v\}$, we denote by $l_{u} \in \{1,2,3\}$ its label.
\begin{definition}\label{def:rules}
Let $T$ be a colored ternary tree rooted at $v$. 
We recursively define the root vector 
  $\rmv \in \RR^{4}$ of~$T$ associated to 
  $v$ according to the following rules:
\begin{quote}
\textbf{Rule 0:} 
$\rmv = (1,1,1,1)^{t}$ when $v$ does not have any children.

\textbf{Rule 1:}
If $v$ has exactly one child $u$ with 
  $\rmu = (u_{s})_{s=0,\ldots,3}$,
  then $\rmv\in[\rmu]$ where
\[
[\rmu]=\left\{(u_1,u_0+u_1,u_3,u_2)^{t}\,,
            (u_1,u_3,u_2,u_0+u_1)^{t}\,, (u_1,u_2,u_0+u_1,u_3)^{t}\right\}\,.
\]
The choice of $\rmv$ depends on the label of $u$; 
  if $l_{u}=i$, $\rmv$ is the $i$-th vector in $[\rmu]$.
 
\textbf{Rule 2:} 
If $v$ has two children $u$ and $w$ with $\rmu =
		(u_{s})_{s=0,\ldots,3}$, $\rmw =
		(w_{s})_{s=0,\ldots,3}$, and
 $(l_{u},l_{w}) \in \{(1,2),(2,3),(3,1)\}$, then
\[\rmv \in  
  \left\{\begin{pmatrix}
          u_1 w_1 \\
	  u_0 w_2{+}u_1 w_3 \\
	  u_3 w_2 \\
	  u_2 w_1{+}u_3 w_0
          \end{pmatrix}\,,  
  \begin{pmatrix}
          u_1 w_1 \\
	  u_3 w_2 \\
	  u_3 w_0{+}u_2 w_1 \\
	  u_1 w_3{+}u_0 w_2
          \end{pmatrix}\,, 
  \begin{pmatrix}
          u_1 w_1 \\
	  u_3 w_0{+}u_2 w_1 \\
	  u_0 w_2{+}u_1 w_3 \\
	  u_3 w_2
          \end{pmatrix}\, 
 \right\}\,.
\]
The choice of $\rmv$ depends on $(l_{u},l_{w})$; 
if $l_{u}=i$, $\rmv$ is the $i$-th vector in the last set.

\textbf{Rule 3:}
If $v$ has three children $u$, $w$ and $z$ with 
  $\rmu = (u_{s})_{s=0,\ldots,3}$,
  $\rmw = (w_{s})_{s=0,\ldots,3}$,
  $\rmz = (z_{s})_{s=0,\ldots,3}$, and $(l_{u},l_{w},l_{z})=(1,2,3)$,
then 
\[
\rmv \in  \left\{
  \begin{pmatrix}
          u_0 w_0 z_0{+}u_1 w_1 z_1 \\
	  u_0 w_2 z_3{+}u_1 w_3 z_2 \\
	  u_2 w_3 z_0{+}u_3 w_2 z_1 \\
	  u_2 w_1 z_3{+}u_3 w_0 z_2
          \end{pmatrix} \right\}\,.
\]
\end{quote}
\end{definition}
The following result establishes that determining the 
  degeneracy vector of rooted stack triangulations is
  equivalent to determining the root vector of 
  colored rooted ternary trees.
\begin{lemma} \label{lemma:deg_root}
Let $n \geq 1$ and $\Delta_n$ be the 
  rooted stack triangulation $\Delta_n$. 
Then, the root vector of the colored ternary tree 
  $T(\Delta_n)$ in bijection with $\Delta_n$
  equals the degeneracy vector of $\Delta_n$.
\end{lemma}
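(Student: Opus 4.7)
The plan is to prove the lemma by induction on $n$, the number of interior vertices of $\Delta_n$ (equivalently, the number of nodes of the ternary tree $T(\Delta_n)$).

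For the base case $n=1$, the tree $T(\Delta_1)$ consists of the single root $u_1$ with no children, so its root vector is $(1,1,1,1)^t$ by Rule~0 of \defref{def:rules}. On the other hand, each face $f_1(j)$ reduces to the starting triangle, so $\rmv_{\Delta_1^j}=(0,1,1,1)^t$ for every $j\in\{1,2,3\}$. Substituting these vectors into the formula of \propref{prop:degvect} yields $\rmv_{\Delta_1}=(1,1,1,1)^t$, matching the root vector; this can also be checked directly from the definition of satisfying state (for every spin assignment of the outer triangle, exactly one spin of $u_1$ frustrates a single edge per inner face).

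For the inductive step, assume $n\geq 2$ and that the claim holds for all rooted stack triangulations with strictly fewer interior vertices. By the bijective construction, the structure of $T(\Delta_n)$ at its root is described by \propref{prop:bij}: the root has one, two, or three children according to how many of $\Delta_n^1,\Delta_n^2,\Delta_n^3$ are nontrivial. For each nontrivial $\Delta_n^j$ the inductive hypothesis identifies $\rmv_{\Delta_n^j}$ with the root vector of the sub-ternary tree rooted at the child labeled $j$; for each trivial $\Delta_n^j=\Delta_0$ we simply use $\rmv_{\Delta_0}=(0,1,1,1)^t$. The proof then reduces to substituting these vectors into \propref{prop:degvect} and splitting into cases matching items~\ref{item:bij2}--\ref{item:bij4}. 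When all three children are nontrivial (item~\ref{item:bij4}), writing $(\rmv_{\Delta_n^1},\rmv_{\Delta_n^2},\rmv_{\Delta_n^3})=(\rmu,\rmw,\rmz)$ the formula of \propref{prop:degvect} is literally the vector in Rule~3. When exactly one $\Delta_n^k$ is trivial (item~\ref{item:bij3}), the $0$ in the zeroth coordinate of $(0,1,1,1)^t$ annihilates several products in the formula of \propref{prop:degvect}, and what remains is one of the three vectors listed in Rule~2, selected by the label $l_u$ of the nontrivial child with the smaller label. When two $\Delta_n^j$'s are trivial (item~\ref{item:bij2}), a double substitution of $(0,1,1,1)^t$ reduces the formula to one of the three vectors of Rule~1, selected by the label of the unique nontrivial child; for instance, the case $l_u=1$ gives $\rmv_{\Delta_n}=(u_1,u_0+u_1,u_3,u_2)^t$, exactly as prescribed.

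The main obstacle is not conceptual but notational: one must verify the three sub-cases inside each of Rules~1 and~2, and check that the coordinate ordering $I=\{+++,++-,+-+,-++\}$ of \secref{section:transfer} is consistent with the positional ordering $\{1,2,3\}$ assigned by the labeling procedure, so that the cyclic permutation of entries appearing inside the three alternative vectors of Rule~1 (and similarly Rule~2) corresponds precisely to the rotation of which face of $f_1$ is absorbed by $u_1$. The cyclic symmetry among the three sub-cases of each rule can be exploited to reduce the bookkeeping: once the $l_u=1$ sub-case is checked by direct arithmetic, the $l_u=2$ and $l_u=3$ sub-cases follow by relabeling the outer triangle, which permutes the last three coordinates of the degeneracy vector accordingly. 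With this verification complete in each case, the inductive step closes and the lemma follows.
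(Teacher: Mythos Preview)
Your proposal is correct and follows essentially the same route as the paper: induction on $n$, with the base case handled by Rule~0 and \propref{prop:degvect}, and the inductive step split into the three cases of \propref{prop:bij} (one, two, or three nontrivial $\Delta_n^j$), each reduced to the matching rule of \defref{def:rules} after substituting $(0,1,1,1)^t$ for the trivial pieces. One small slip: in Rule~2 the selecting index is $l_u$ with $(l_u,l_w)\in\{(1,2),(2,3),(3,1)\}$, i.e.\ the cyclically first label rather than the numerically smaller one, so your phrase ``the nontrivial child with the smaller label'' is off in the $\{1,3\}$ case; this is exactly the kind of bookkeeping you flag in your last paragraph and does not affect the argument.
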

\begin{proof} 
By induction on $n$. 
For the base case $n=1$; the stack triangulation $\Delta_1$ 
is isomorphic to $K_4$ and $T(\Delta_1)$ is a vertex. 
It is clear that $\Delta_1[\phi]=1$ for all $\phi \in I$, and 
the root vector of $T(\Delta_1)$ is obtained by Rule~0
in Definition~\ref{def:rules}. 

Now, let $\Delta_n$ be a rooted stack 
  triangulation with $n>1$. 
We denote by $v$ the root of $T(\Delta_n)$.
We separate the proof in cases according to 
  the number of vertices of the
  rooted stack triangulations $\Delta_{n_i}=
  \Delta_{n}^{i}$ with $i \in \{1,2,3\}$.
We note that if $n_i=0$ for every $i \in \{1,2,3\}$, 
  then $n=1$. 
Thus, we can assume that $n_i\geq1$ for at least one 
  index $i \in \{1,2,3\}$.
We now consider three possible situations.

First, assume there are $i,j \in \{1,2,3\}$ with 
  $i\neq j$ and $k \in \{1,2,3\}\setminus \{i,j\}$ such that  
  $n_i=n_j=0$ and $n_{k}\geq 1$. 
By definition of the degeneracy vector, we have that 
    $\rmv_{\Delta_{n_i}}=\rmv_{\Delta_{n_j}} = (0,1,1,1)^{t}$.
Let $\rmv_{\Delta_{n_k}}= (v_{k}^{t})_{t \in \{0,1,2,3\}}$. 
According to Proposition~\ref{prop:degvect}, we have that
\[
\rmv_{\Delta_{n}} \in  \left\{ \begin{pmatrix}
		v_{1}^{1}\\
		v_{1}^{0}+ v_{1}^{1}\\
		v_{1}^{3}\\
		v_{1}^{2}
		\end{pmatrix},
\begin{pmatrix}
		v_{2}^{1}\\
		v_{2}^{3}\\
		v_{2}^{2}\\
		v_{2}^{1}+ v_{2}^{0}
		\end{pmatrix},
\begin{pmatrix}
		v_{3}^{1}\\
		v_{3}^{2}\\
		v_{3}^{0} + v_{3}^{1}\\
		v_{3}^{3} 
		\end{pmatrix}\,\right\}\,,
\]   
where $\rmv_{\Delta_{n}}$ is 
  the $k$-th vector in the set above. 
Statement~\ref{item:bij2}  of Proposition~\ref{prop:bij} says   
  that $T(\Delta_{n_k})$ is labeled by $k$ and rooted 
  on $w$, where $w$ is the unique child of $v$.
Given that $1 \leq n_k < n$, 
  by induction we get that $\rmw
  =\rmv_{\Delta_{n_k}}$.
By Definition~\ref{def:rules}, we know that 
  $\rmv$ is obtained
  from $\rmw$ by application of Rule~1. 
Hence, $\rmv=\rmv_{\Delta_{n}}$. 

Assume now that there is an $i\in \{1,2,3\}$ such that 
  $n_{i}=0$ and $j, k \in \{1,2,3\} \setminus \{i\}$ 
  with $j\neq k$  such that $n_j, n_k \geq 1$. 
We have that $\rmv_{\Delta_{n_i}} = (0,1,1,1)^{t}$. 
Consider $\rmv_{\Delta_{n_j}} = (v_{j}^{t})_{t \in \{0,1,2,3\}}$ 
  and $\rmv_{\Delta_{n_k}} = (v_{k}^{t})_{t \in \{0,1,2,3\}}$. 
Proposition~\ref{prop:degvect} implies that
 \[
\rmv_{\Delta_{n}} \in  \left\{ \begin{pmatrix}
		v_{2}^{1}v_{3}^{1}\\
		v_{2}^{3}v_{3}^{2}\\
		v_{2}^{3}v_{3}^{0} + v_{2}^{2}v_{3}^{1}\\
		v_{2}^{1}v_{3}^{3} + v_{2}^{0}v_{3}^{2}
		\end{pmatrix},
\begin{pmatrix}
		v_{1}^{1}v_{3}^{1}\\
		v_{1}^{0}v_{3}^{3} + v_{1}^{1}v_{3}^{2}\\
		v_{1}^{2}v_{3}^{0} + v_{1}^{3}v_{3}^{1}\\
		v_{1}^{2}v_{3}^{3} 
		\end{pmatrix},
\begin{pmatrix}
		v_{1}^{1}v_{2}^{1}\\
		v_{1}^{0}v_{2}^{2}+ v_{1}^{1}v_{2}^{3}\\
		v_{1}^{3}v_{2}^{2}\\
		v_{1}^{2}v_{2}^{1}+ v_{1}^{3}v_{2}^{0}
		\end{pmatrix}
\right\}\,,
\]
where $\rmv_{\Delta_{n}}$ is the $i$-th vector in the set above.
Statement~\ref{item:bij3} of Proposition~\ref{prop:bij} guarantees
  that the root $v$ of $T(\Delta)$ 
  has exactly two children $w$ and $u$ 
  labeled $j$ and $k$, respectively.
%Therefore, $\rmv$ is defined 
%  according to Rule~2 in Definition~\ref{def:rules}. 
Moreover, $T(\Delta_{n}^{j})$ and $T(\Delta_{n}^{k})$ are rooted on $w$
  and $u$, respectively. 
We know that $1 \leq n_j < n$ and $1 \leq n_k < n$,
  then by induction, $\rmw=\rmv_{\Delta_{n_j}}$
  and $\rmu=\rmv_{\Delta_{n_k}}$. 
If we now apply Rule~2 of Definition~\ref{def:rules}, we get
  $\rmv=\rmv_{\Delta_{n}}$.

Finally, assume that $n> n_j \geq 1$ for every $j \in \{1,2,3\}$.
Suppose that $\rmv_{\Delta_{n_j}} = (v_{j}^{t})_{t \in \{0,1,2,3\}}$ 
  for each $j \in \{1,2,3\}$.
Statement~\ref{item:bij4} of Proposition~\ref{prop:bij}
  and the induction hypothesis imply that the root $v$ of $T(\Delta)$
  has three children $w_1$, $w_2$ and $w_3$ such that 
  $\rmw_j=\rmv_{\Delta_{n_j}}$ for each $j \in \{1,2,3\}$.
By Proposition~\ref{prop:degvect} and since $\rmv$ is derived by 
   applying Rule~3 of Definition~\ref{def:rules},
  the desired conclusion follows.
\end{proof}

\section{Colored Rooted Ternary Trees}\label{section:main}
The goal of this section is to prove a result that we should
  refer to as the Main Lemma which shows that the degeneracy
  of stack triangulations is exponential in the number of its
  nodes.  

We now introduce notation that will be useful when dealing with
  rooted ternary trees.
We denote by $|T|$ the number of vertices of the ternary tree $T$. 
For any node $u$ of $T$, we denote by $T_{u}$ the colored rooted
  sub-ternary tree of $T$ rooted at $u$ and induced by $u$
  and its descendants.  
Also, 
  we denote by~$P_{\tilde{w},w}$ any path with end nodes $\tilde{w}$ and $w$.
Moreover, $||P_{\tilde{w},w}|| = |P_{\tilde{w},w}|-1$
denotes the length of $P_{\tilde{w},w}$.

\subsection{Remainders}\label{subsec:remain}
In this subsection we introduce the concept of $\emph{remainder}$ 
  of a rooted ternary tree and prove some useful and fundamental claims
  related to this concept.
We will show that after removing remainders from a rooted ternary tree
  we are still left with a tree of size at least a third of the 
  original one. 
The root vertex of the derived remainder free tree will provide a 
  component wise lower bound on the components of the root vertex 
  of the original rooted ternary tree. 
The underlying motivation for this section is that lower bounding
  the components of a root vertex is significantly easier for
  remainder free rooted ternary trees.

\begin{definition}\label{def:remain} 
Let $v$ be a leaf of $T$ and $w$ be its father. 
Consider the following cases:
\begin{enumerate}[label=\Roman{*}.-,ref=\Roman{*}]
\item \label{3ch} If $u\neq v$ is a child of $w$,
  then $|T_{u}| \geq 3$.
\item \label{1ch} If $T_{w}$ is just the edge $wv$,
  then the father of $w$, say $y$, has two children $w$ and $u$, 
    where $|T_{u}|\geq3$.
\end{enumerate}
If Case~\ref{3ch} holds, we say that $\{v\}$ is a
  \emph{remainder} of $T$ and that $w$ is the \emph{generator} 
  of $\{v\}$. 
If Case~\ref{1ch} holds we say that $\{v,w\}$ is a 
  \emph{remainder} of $T$ and that $y$ is its \emph{generator}. 
We say that $T$ is reminder free if it does not contain any remainder. 
We denote the set of remainders of $T$ by $R(T)$ and by $G(R(T))$ 
  the set of its generators.
\end{definition}
See Figures~\ref{remainder_1} and~\ref{remainder_2} 
  for an illustration of the distinct situations encompassed by each
  of the preceding definition's cases.

\begin{figure}[h]
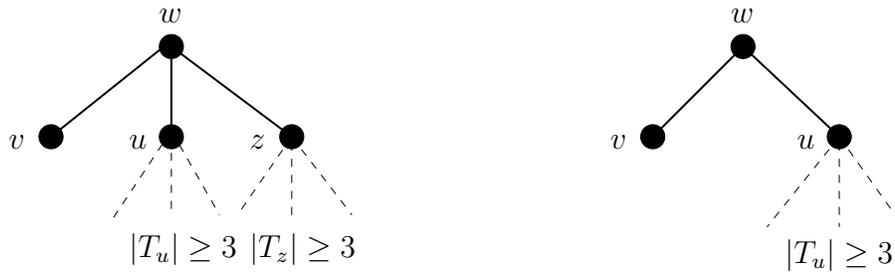

\centering
\ifpdf\input{remainders.pdf_t}\else\input{remainders.pstex_t}\fi
\caption{Structure of $T_{w}\subseteq T$ having a remainder $v$ of $T$
  with generator $w$. 
  Case where $w$ has three children (left) and two children 
  (right).}\label{remainder_1}
\end{figure}
\begin{figure}[h]
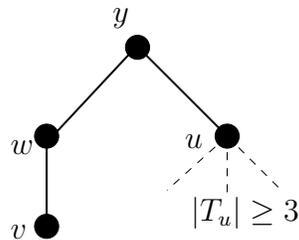

\centering
\ifpdf\input{remainders_1ch.pdf_t}\else\input{remainders_1ch.pstex_t}\fi
\caption{Structure of $T_{y}\subseteq T$ having a remainder 
  $\{v,w\}$ with generator $y$.}\label{remainder_2}
\end{figure}

\begin{proposition}\label{bound}
Let $T$ be a rooted ternary tree.
% NOTE: I don't think we need to say that the tree is ``colored'', omit?: ok... it is not needed.
Then,  $|R(T)|= |G(R(T))|$. 
\end{proposition}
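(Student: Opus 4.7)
The plan is to show that the natural map $\phi \colon R(T) \to G(R(T))$ sending each remainder to its generator is a bijection. By the very way $G(R(T))$ is defined (as the set of generators of remainders), $\phi$ is automatically surjective, so the whole content of the proposition reduces to proving injectivity: no vertex $x \in V(T)$ can serve as the generator of two distinct remainders.

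I would proceed by a short case analysis based on which of Cases~\ref{3ch} and~\ref{1ch} produces the two remainders sharing a common generator $x$. Suppose first that both are of type~\ref{3ch}, so the remainders are $\{v_1\}$ and $\{v_2\}$ with $v_1, v_2$ distinct leaf children of $x$. Then the defining condition of Case~\ref{3ch} applied to $\{v_1\}$ requires every other child $u \ne v_1$ of $x$ to satisfy $|T_u| \ge 3$; taking $u = v_2$ gives $1 \ge 3$, a contradiction. Next, suppose both remainders are of type~\ref{1ch}, say $\{v_1,w_1\}$ and $\{v_2,w_2\}$ with common generator $x$. Then $w_1$ and $w_2$ are both children of $x$, and each $T_{w_i}$ consists of a single edge, so $|T_{w_i}| = 2$. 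But Case~\ref{1ch} applied to $\{v_1,w_1\}$ demands that the other child of $x$ (namely $w_2$) satisfies $|T_{w_2}| \ge 3$, contradicting $|T_{w_2}| = 2$.

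The remaining mixed case is that $x$ generates one remainder of each type: a Case~\ref{3ch} remainder $\{v_1\}$ and a Case~\ref{1ch} remainder $\{v_2,w_2\}$. From Case~\ref{3ch}, $v_1$ is a leaf child of $x$, and from Case~\ref{1ch}, $w_2$ is a child of $x$ with $|T_{w_2}| = 2$. Applying the Case~\ref{3ch} condition at $\{v_1\}$ to the child $u = w_2 \ne v_1$ of $x$ yields $|T_{w_2}| \ge 3$, again contradicting $|T_{w_2}| = 2$. Having ruled out all three possibilities, $\phi$ is injective, and hence $|R(T)| = |G(R(T))|$.

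I do not anticipate any real obstacle here; the statement is essentially a direct unpacking of Definition~\ref{def:remain}. The only mild subtlety is the need to treat the Case~\ref{3ch} condition as a universal quantifier over the siblings of $v$ (i.e., \emph{every} other child of $w$ has subtree size at least $3$), since this is precisely what forbids two leaves sharing a father from both being remainders and what forbids the mixed case above.
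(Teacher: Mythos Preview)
Your proof is correct and follows essentially the same approach as the paper: both argue that the map sending a remainder to its generator is surjective by definition and injective by the same three-case contradiction analysis. The only cosmetic difference is that in the mixed case you derive the contradiction from the Case~\ref{3ch} condition applied to the sibling $w_2$, whereas the paper invokes the Case~\ref{1ch} condition to force $|T_{v_1}|\geq 3$; either route works.
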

\begin{proof} 
It is enough to show that any vertex $w \in G(R(T))$ is the generator
  of exactly one remainder of~$T$. 
For the sake of contradiction, suppose that $w$ is the generator of 
  at least two remainders of $T$, say $S_1$ and $S_2$.
We consider three possible cases which cover all possible 
  scenarios:
  (i) $S_1=\{v\}$ and $S_2=\{u\}$, 
  (ii) $S_1=\{v,\tilde{v}\}$ and $S_2=\{u,\tilde{u}\}$, and
  (iii) $S_1=\{v\}, S_2=\{u,\tilde{u}\}$.

If $S_1=\{v\}$ and $S_2=\{u\}$, then by Case~\ref{3ch} 
  of Definition~\ref{def:remain}, we get that $|T_{v}|\geq 3$.
If $S_1=\{v,\tilde{v}\}$ and $S_2=\{u,\tilde{u}\}$, then
  by Case~\ref{1ch} of Definition~\ref{def:remain}, we get that
  $|T_{\tilde{v}}|\geq 3$.
If $S_1=\{v\}$ and $S_2=\{u,\tilde{u}\}$, then 
  by Case~\ref{1ch} of Definition~\ref{def:remain}, 
  we have that $|T_{v}|\geq 3$.
Hence, all feasible cases lead to contradictions.
\end{proof}

Let $V_{R(T)}$ denote the subset of vertices of $T$ which belong to the 
  elements of $R(T)$,
  i.e.~$V_{R(T)}~=~\cup_{S\in R(T)}\{v: v \in S\}$.

\begin{lemma}\label{inters}
Let $T$ be a rooted ternary tree.
% NOTE: I don't think we need to say that the tree is ``colored'', omit? ok
Then, $\tilde{T} = T \setminus V_{R(T)}$ is remainder free.
\end{lemma}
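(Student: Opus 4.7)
I would proceed by contradiction: assume $\tilde T := T\setminus V_{R(T)}$ still contains some remainder and show that this configuration must already have been a remainder of $T$, contradicting the removal of $V_{R(T)}$.

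The preparatory step is to establish that every leaf $a$ of $\tilde T$ is already a leaf of $T$. Suppose not, so $a$ has $k\ge 1$ children $c_1,\ldots,c_k$ in $T$, necessarily all in $V_{R(T)}$. Each such $c_i$ belongs to some remainder of $T$, which forces $c_i$ to be either (i) a Case~I leaf-remainder with generator $a$, (ii) the leaf of a Case~II remainder $\{c_i,a\}$ in which $a$ plays the role of the non-leaf node, or (iii) the non-leaf node of a Case~II remainder $\{v',c_i\}$ whose generator is $a$. Option~(ii) is immediately impossible because it would place $a$ into $V_{R(T)}$, contradicting $a\in\tilde T$. Enumerating over $k\in\{1,2,3\}$, I would rule out every combination of the remaining options (i) and (iii) by pitting the Case~I requirement ``every other child of the generator has subtree of size $\ge 3$'' and the Case~II requirement ``the grandparent has exactly two children, one with subtree of size $\ge 3$'' against the facts $|T_{c_j}|=1$ (when $c_j$ is of type (i)) and $|T_{c_j}|=2$ (when of type (iii)); in every subcase either a subtree-size or a child-count condition fails.

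With the claim in hand I would split on the type of remainder of $\tilde T$. For a Case~I remainder $\{a\}$ with generator $g$, the claim gives that $a$ is a leaf of $T$, and every non-$a$ child $u$ of $g$ in $\tilde T$ satisfies $|T_u|\ge|\tilde T_u|\ge 3$. It then remains to show that $g$ has no removed child $c$ in $T$: a split on whether $c$ is of type (i), (ii), or (iii) above (now with generator $g$) produces in each case either the impossible $|T_a|\ge 3$ or an exact-two-children condition on $g$ that clashes with $g$ having at least two non-$a$ children in $\tilde T$. Hence the children of $g$ in $T$ coincide with those in $\tilde T$, making $\{a\}$ a Case~I remainder of $T$, so $a\in V_{R(T)}$, a contradiction. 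For a Case~II remainder $\{a,b\}$ of $\tilde T$ with generator $g$, a parallel case analysis applied first to the potential extra children of $b$ in $T$ and then to those of $g$ shows that $T_b$ is precisely the edge $ba$ and that $g$ has exactly two children $b$ and $u$ in $T$ with $|T_u|\ge 3$, so $\{a,b\}$ is a Case~II remainder of $T$, again contradicting $a,b\in\tilde T$.

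The argument is essentially bookkeeping rather than conceptual; the main care point is to keep the case analysis organised, so that each branch correctly tracks how many children Definition~\ref{def:remain} imposes on the generator at hand and the resulting contradictory child-count or subtree-size inequality becomes available.
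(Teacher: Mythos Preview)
Your proposal is correct and follows essentially the same route as the paper: argue by contradiction and perform a case analysis on the type of the hypothetical remainder in $\tilde{T}$, repeatedly using that a removed child of a surviving vertex forces that vertex to be a generator, which in turn imposes size or child-count constraints that clash with the assumed configuration. The only organisational difference is that you isolate ``every leaf of $\tilde{T}$ is already a leaf of $T$'' as a stand-alone claim up front, whereas the paper re-derives this inside each of its three scenarios (Case~I with three children, Case~I with two children, Case~II); your packaging is a little cleaner but not a different argument. One small wording slip: in your Case~I analysis the clash with option~(iii) is not that $g$ has ``at least two non-$a$ children in $\tilde{T}$'' (it may have only one), but rather that $g$ already has at least two children $a,u$ surviving in $\tilde{T}$ plus the removed child $c$, hence at least three children in $T$, contradicting the exact-two-children requirement of Case~II.
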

\begin{proof}
For the sake of contradiction, assume $S$ is a remainder of $\tilde{T}$. 
We consider three scenarios depending on which case of 
  Definition~\ref{def:remain} holds for $S$.

First, assume $S=\{v\}$ satisfies Case~\ref{3ch} of Definition~\ref{def:remain}
  and the father of $v$ in $\tilde{T}$ has tree
  children $v,u,z$ with $|\tilde{T}_{u}|, |\tilde{T}_{z}|\geq3$. 
Clearly, $|T_{u}|\geq|\tilde{T}_{u}|$ and $|T_{z}|\geq|\tilde{T}_{z}|$. 
Since $v$ has no children in $\tilde{T}$, 
  it follows that $v \notin G(R(T))$.
Thus, $v$ is a leaf of $T$ and $\{v\} \in R(T)$.

Assume now that $S=\{v\}$ satisfies Case~\ref{3ch} of 
  Definition~\ref{def:remain} and $v$'s father in $\tilde{T}$, say $w$, 
  has two children $v,u$ with $|\tilde{T}_{u}|\geq 3$. 
We have $|T_{u}|\geq|\tilde{T}_{u}|\geq 3$.
Moreover, since $v$ is a leaf of $\tilde{T}$, it must also hold 
  that~$v$ is a leaf of $T$ (otherwise, all of $v$'s children in $T$
  must belong to some reminder, a situation that is not possible). 
If $w$ has three children in $T$, say $v,u,z$, then $\{z\}\in R(T)$.
This implies that $|T_{v}|\geq 3$, contradicting the fact that
  $v$ is a leaf of $T$. 
Hence, $w$ has two children in $T$.
It follows that $\{v\} \in R(T)$. 

Finally, assume $S=\{v,\tilde{v}\}$ satisfies Case~\ref{1ch} of 
  Definition~\ref{def:remain}. 
Let $w$ be the generator of $S$ and the father of $\tilde{v}$ in $\tilde{T}$. 
Then, $w$ has two children $\tilde{v},u$ in $\tilde{T}$
  with $|\tilde{T}_{u}| \geq 3$. 
We again have that $|T_{u}|\geq|\tilde{T}_{u}| \geq 3$ and that 
  $v$ is a leaf of $T$.
Assume $w$ has three children in $T$, say $\tilde{v},u,z$.
Then, $\{z\}\in R(T)$, implying that $|T_{\tilde{v}}|\geq 3$, 
  and hence $\tilde{v} \in G(R(T))$. 
Therefore, $|T_{v}|\geq 3$, but this cannot happen because $v$ is a 
  leaf of $T$.
Thus, $w$ must have only two children in $T$.
If $\tilde{v}$ has exactly two children in $T$,
  then $\tilde{v} \in G(R(T))$ and $|T_{v}|\geq 3$, contradicting again
  the fact that $v$ is a leaf.
If  $\tilde{v}$ has only one child, then $\{v,\tilde{v}\}\in R(T)$,
  which contradicts the fact that $\tilde{v}$ is a node of $\tilde{T}$.

Since all possible scenarios lead to a contradiction, the 
  desired conclusion follows.
\end{proof}

\begin{lemma}\label{lemma:b} 
Let $\tilde{T} = T \setminus V_{R(T)}$.
Then, $|\tilde{T}|\geq |T|/3$. 
\end{lemma}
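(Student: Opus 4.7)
The plan is to write $|T| = |\tilde{T}| + |V_{R(T)}|$ and bound $|V_{R(T)}|$ from above by $2|\tilde{T}|$. Since every element of $R(T)$ is a set of at most two vertices (either $\{v\}$ from Case~\ref{3ch} or $\{v,w\}$ from Case~\ref{1ch} of Definition~\ref{def:remain}) and distinct remainders are vertex-disjoint by inspection of the definition, we immediately get
\[
|V_{R(T)}| \;\leq\; 2\,|R(T)|.
\]
Combining with \propref{bound}, which gives $|R(T)| = |G(R(T))|$, reduces the problem to showing $|G(R(T))| \leq |\tilde{T}|$, i.e.\ that every generator survives the deletion of $V_{R(T)}$.

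I would carry out that key step by a short case analysis verifying the inclusion $G(R(T)) \subseteq V(\tilde{T})$. Let $w \in G(R(T))$ be the generator of some remainder $S$. In either case of \defref{def:remain}, $w$ has a child $u$ (distinct from every vertex of $S$) with $|T_u| \geq 3$, so in particular $w$ is not a leaf. Now suppose, for contradiction, that $w \in V_{R(T)}$, i.e.\ $w$ lies in some (possibly other) remainder $S'$. A Case~\ref{3ch} remainder consists of a single leaf, so $w$ cannot be in such an $S'$. A Case~\ref{1ch} remainder is a pair $\{v',w'\}$ where $v'$ is a leaf and $w'$ has $v'$ as its only child; $w$ is not a leaf, so $w$ would have to play the role of $w'$, forcing $w$ to have exactly one child in $T$. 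But $w$ being a generator forces it to have the additional child $u$ with $|T_u| \geq 3$, a contradiction.

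Putting the pieces together,
\[
|V_{R(T)}| \;\leq\; 2\,|R(T)| \;=\; 2\,|G(R(T))| \;\leq\; 2\,|\tilde{T}|,
\]
so $|T| = |\tilde{T}| + |V_{R(T)}| \leq 3\,|\tilde{T}|$, which is exactly the claimed inequality $|\tilde{T}| \geq |T|/3$. The main obstacle is verifying that generators are disjoint from $V_{R(T)}$; everything else is bookkeeping. The constant $1/3$ is essentially tight and arises because each generator in $\tilde{T}$ can be held responsible for the deletion of up to two vertices (the two-vertex remainders of Case~\ref{1ch} are what prevent a better ratio from this elementary argument).
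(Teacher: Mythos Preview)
Your argument is correct and follows the same route as the paper: the paper's proof is the one-line observation that $G(R(T))$ is disjoint from $V_{R(T)}$, each remainder has cardinality at most $2$, and $|R(T)|=|G(R(T))|$ by \propref{bound}. You have simply unpacked the disjointness claim with a short case analysis, which the paper asserts without justification. One small remark: the bound $|V_{R(T)}|\le 2|R(T)|$ already follows from $|S|\le 2$ for each $S\in R(T)$ by subadditivity of $|\cdot|$ under union, so you do not actually need the vertex-disjointness of distinct remainders that you mention in passing.
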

\begin{proof}
Follows from the fact that $G(R(T))$ and $R(T)$ are disjoint,
  that each element $S\in R(T)$ is of cardinality at most $2$,
  and Proposition~\ref{bound}.
\end{proof}

\subsection{Counting satisfying states}\label{subsec:css}
In this section, we establish properties of the root 
  vectors of colored rooted ternary trees  
  and relate them to characteristics of colored tree.
Informally, for some special classes of
  colored rooted ternary trees, 
  we obtain lower bounds for the sum of the coordinates of its
  associated rooted vectors. 

Recall that $\varphi=(1+\sqrt{5})/2\approx 1.6180$ denotes the golden ratio.
For $s\in\setof{0,\ldots,3}$, let $e_{s}\in\NN$ and
  $\rme = (\varphi^{e_s})_{s=0,\ldots,3}$.
Define
\begin{eqnarray*}
  \Psi(\rme)  =  2\sum_{j=1}^{3}e_{j}\,, & \text{and} &
  \Phi(\rme)  =  \Psi(\rme) -\left|\set{s}{e_{s}> e_{0}}\right|\,.
\end{eqnarray*}
%We start with a simple, yet useful observation.
%\begin{proposition}{\rm\textbf{[Monotonicity Property]}}
%Let $w$ be a node of a colored ternary tree $T$ rooted at $v$
%  such that $\rmw\geq \rme=(\varphi^{e_s})_{s=0,\ldots,3}$
%  for some $e_0,e_1,e_2,e_3\in\NN$.
%Then, there are $f_0,f_1,f_2,f_3\in\NN$ such that 
%  $\rmv\geq\rmf=(\varphi^{f_s})_{s=0,\ldots,3}$ and $\Psi(\rmf)\geq\Psi(\rme)$.
%\end{proposition}
%\begin{proof}
%Follows immediately from the definition of Rules 0, 1, 2 and 3.
%\end{proof}

Henceforth, for a vector $\rmv$ we let 
  $\llbracket v\rrbracket$ denote the collection of all 
  vectors obtained by fixing the first coordinate of 
  $\rmv$ and permuting the remaining coordinates in
  an arbitrary way.
Note that if 
  $\rme = (\varphi^{e_s})_{s=0,\ldots,3}$ with 
  $e_0,e_1,e_2,e_3\in\NN$, then for all 
  $\tilde{\rme}\in\llbracket\rme\rrbracket$ we have that
  $\Psi(\tilde{\rme})=\Psi(\rme)$ and 
  $\Phi(\tilde{\rme})=\Phi(\rme)$.
For a set $S$ of vectors, we let $\llbracket S\rrbracket$
  denote the union of the sets $\llbracket \rmv\rrbracket$
  where $\rmv$ varies over $S$.

Given vectors $\rmx=(x_s)_{s=0,\ldots,3}$ and 
  $\rmy=(y_s)_{s=0,\ldots,3}$, we write 
  $\rmx \geq \rmy$ if $x_s \geq y_s$ for all $s\in \{0,\ldots,3\}$.
\begin{proposition}\label{prop:or2} 
Let $T_v$ be a colored rooted ternary tree with $|T_v|=2$. 
Then, there are $e_0,e_1,e_2,e_3\in\NN$ such that 
  $\rmv \geq \rme=(\varphi^{e_s})_{s=0,\ldots,3}$ 
  and $\Psi(\rme) = 2$.
\end{proposition}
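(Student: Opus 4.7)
The plan is to unroll the only tree of size two and then explicitly bound its root vector by a vector of the required form.

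Since $|T_v|=2$, the root $v$ has a unique child $u$, which is a leaf. By Rule~0 of Definition~\ref{def:rules}, we have $\rmu=(1,1,1,1)^t$. Applying Rule~1 with this $\rmu$, whatever the label $l_u\in\{1,2,3\}$ may be, the resulting vector $\rmv$ is a permutation of $(1,2,1,1)^t$ in which the first coordinate equals $1$ (the $2$ appears at the coordinate indexed by $l_u$, reflecting the $u_0+u_1$ entry).

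Given this, I would simply exhibit the desired $\rme$. Set $e_{l_u}=1$ and $e_s=0$ for every other $s\in\{0,1,2,3\}$; then $\rme=(\varphi^{e_s})_{s=0,\ldots,3}$ is the vector with a $\varphi$ in the $l_u$-th coordinate and $1$'s elsewhere. Since $\varphi<2$ and $1\le 1$ in the other coordinates, we have $\rmv\ge\rme$ component-wise. The sum $\sum_{j=1}^3 e_j$ equals $1$ if $l_u\in\{1,2,3\}$, so $\Psi(\rme)=2\sum_{j=1}^3 e_j=2$, as required.

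There is essentially no obstacle here: the only point to double-check is that in each of the three possible cases produced by Rule~1 the ``large'' coordinate is one of $v_1,v_2,v_3$ (never $v_0$), so that the increased exponent can be placed at an index $j\in\{1,2,3\}$ and thereby contribute to $\Psi$. This is immediate from the explicit list $[\rmu]$ in Rule~1, where the first coordinate is always $u_1=1$ and the entry $u_0+u_1=2$ appears in coordinate $l_u\in\{1,2,3\}$. Hence the proof reduces to this single verification.
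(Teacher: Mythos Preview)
Your approach is exactly the paper's: apply Rule~0 to the leaf, then Rule~1 to the root, observe that $\rmv$ has first coordinate $1$ and a single $2$ among the remaining three, and bound by the matching vector in $\llbracket(\varphi^0,\varphi^1,\varphi^0,\varphi^0)^t\rrbracket$. One small slip: inspecting the three vectors in $[\rmu]$, the entry $u_0+u_1=2$ sits at coordinate $1,3,2$ when $l_u=1,2,3$ respectively, not at coordinate $l_u$ itself, so your explicit choice $e_{l_u}=1$ fails for $l_u\in\{2,3\}$; but since you only need existence and you correctly note in your final paragraph that the $2$ never lands in coordinate~$0$, the fix is immediate and the argument stands.
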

\begin{proof} 
Clearly $T_v$ is a rooted tree on $v$ with exactly one child $w$ which
  is a leaf of $T_v$. 
In other words, $T_v = P_{w,v}$ with $||P_{w,v}||=1$.
We observe that by applying Rules~0 and~1, we get that
  $\rmw = (1,1,1,1)^{t}$ and 
  $\rmv~\in~\llbracket (1,2,1,1)^{t}\rrbracket$.
Given that $1 = \varphi^{0}$ and $2 \geq \varphi^{1}$, it is easy to 
  see that the desired vector $\rme$ belongs to 
  $\llbracket (\varphi^{0}, \varphi^{1}, \varphi^{0}, \varphi^{0})\rrbracket$
\end{proof}

\begin{proposition}\label{prop:or3} 
Let $T_v$ be a colored rooted ternary tree with $|T_v|=3$. 
Then, there are $e_0,e_1,e_2,e_3\in\NN$ such that $\rmv \geq \rme
	=(\varphi^{e_s})_{s=0,\ldots,3}$ and $\Psi(\rme) = 4$.
\end{proposition}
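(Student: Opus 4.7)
The plan is to enumerate the possible isomorphism types of a colored rooted ternary tree $T_v$ on three vertices, then apply the recursive rules of Definition~\ref{def:rules} to exhibit the root vector in each case, and finally pick exponents $e_0,e_1,e_2,e_3$ satisfying the claim. There are exactly two shapes: either $T_v$ is a path $v\text{-}w\text{-}u$ of length two, or $v$ has two leaf children (a ``cherry'').

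In the path case, let $u$ be the unique grandchild of $v$ with father $w$. By Rule~0 we have $\rmu=(1,1,1,1)^{t}$, and by Rule~1 applied at $w$ we obtain $\rmw\in\llbracket(1,2,1,1)^{t}\rrbracket$ regardless of the label of $u$; for concreteness take $\rmw=(1,2,1,1)^{t}$ (the other two options only permute the last three coordinates). Applying Rule~1 once more at $v$ with child $w$ produces, for the three possible labels of $w$, the three vectors $(2,3,1,1)^{t}$, $(2,1,1,3)^{t}$ and $(2,1,3,1)^{t}$, so in all cases $\rmv\in\llbracket(2,3,1,1)^{t}\rrbracket$. Using $2\geq\varphi^{1}$, $3\geq\varphi^{2}$ and $1=\varphi^{0}$, I would set $(e_0,e_1,e_2,e_3)$ equal to $(1,2,0,0)$ suitably permuted on the last three entries to match $\rmv$; then $\rmv\geq\rme$ and $\Psi(\rme)=2(2+0+0)=4$.

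In the cherry case, let $w_1,w_2$ be the two leaf children of $v$, both with root vector $(1,1,1,1)^{t}$ by Rule~0. Plugging these into the three vectors of Rule~2 yields $(1,2,1,2)^{t}$, $(1,1,2,2)^{t}$ and $(1,2,2,1)^{t}$, so $\rmv\in\llbracket(1,2,1,2)^{t}\rrbracket$ (all three belong to this equivalence class). Since $1=\varphi^{0}$ and $2\geq\varphi^{1}$, I would take $(e_0,e_1,e_2,e_3)=(0,1,0,1)$, permuted on the last three entries to match the order of coordinates of $\rmv$; then $\rmv\geq\rme$ and $\Psi(\rme)=2(1+0+1)=4$, as required.

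The argument is essentially a finite verification, so there is no genuine obstacle beyond bookkeeping. The only subtle points are to observe that $\Psi$ is invariant under the permutation built into $\llbracket\cdot\rrbracket$ (which was already noted right before Proposition~\ref{prop:or2}), and to remember that $\varphi^{2}=\varphi+1<3$ so the ``3'' coordinate in the path case can indeed absorb a $\varphi^{2}$. Both shapes lead to $\Psi(\rme)=4$, completing the proof.
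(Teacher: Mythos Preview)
Your cherry case is fine and matches the paper. The path case, however, contains a real gap: your reduction ``for concreteness take $\rmw=(1,2,1,1)^{t}$'' is not a valid without-loss-of-generality step. While the three possible values of $\rmw$ do lie in $\llbracket(1,2,1,1)^{t}\rrbracket$, Rule~1 is \emph{not} invariant under permutations of the last three input coordinates: the new first coordinate is always the old $w_{1}$, and the special sum $w_{0}+w_{1}$ uses only positions $0$ and $1$. Concretely, if the label of $u$ is $2$ then $\rmw=(1,1,1,2)^{t}$, and applying Rule~1 at $v$ gives $\rmv\in\{(1,2,2,1)^{t},(1,2,1,2)^{t},(1,1,2,2)^{t}\}=\llbracket(1,2,2,1)^{t}\rrbracket$, which is disjoint from $\llbracket(2,3,1,1)^{t}\rrbracket$. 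The paper correctly records both outcomes, $\rmv\in\llbracket(2,3,1,1)^{t},(1,2,2,1)^{t}\rrbracket$, and uses $\rme\in\llbracket(\varphi^{0},\varphi^{1},\varphi^{1},\varphi^{0})^{t}\rrbracket$ in the second case.

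The fix is easy---just treat the missed case, where $\Psi(\rme)=2(1+1+0)=4$ again---but as written your enumeration is incomplete.
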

\begin{proof} 
Since $|T_v|=3$, either $T_v = P_{w,v}$ with $||P_{w,v}||=2$, or
  $v$ has exactly two children $w$ and $u$, which are leaves of $T_v$.

In the first scenario, applying Rule~0 once and Rule~1 twice, we get that 
  $\rmv \in \llbracket (2,3,1,1)^{t},(1,2,2,1)^{t}\rrbracket$.
Given that $1 = \varphi^{0}$, $2 \geq \varphi^{1}$ and $3 \geq \varphi^{2}$, 
  we can take $\rme\in 
  \llbracket (\varphi^{1},\varphi^{2},\varphi^{0},\varphi^{0})^{t},
       (\varphi^{0},\varphi^{1},\varphi^{1},\varphi^{0})^{t}\rrbracket$
  satisfying the statement.

In the second scenario, applying Rule~0, we get that 
  $\rmw$ and $\rmu$ are vectors all of whose coordinates are $1$. 
Applying Rule~2, we see that
  $\rmv \in \llbracket (1,2,1,2)^{t}\rrbracket$.
Given that $1 = \varphi^{0}$ and $2 \geq \varphi^{1}$, the desired 
  vector $\rme$ may be chosen
  from the set
  $\llbracket (\varphi^{0},\varphi^{1},\varphi^{0},\varphi^{1})^{t}\rrbracket$.
\end{proof}

\begin{proposition}\label{prop:or4} 
Let $T_v$ be a colored rooted ternary tree with $|T_v|=4$. 
Then, there are $e_0,e_1,e_2,e_3\in\NN$ such that $\rmv \geq \rme
  =(\varphi^{e_s})_{s=0,\ldots,3}$ and $\Psi(\rme) \geq 6$.
\end{proposition}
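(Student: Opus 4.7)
The plan is to proceed by case analysis on the shape of $T_v$, just as in the proofs of Propositions~\ref{prop:or2} and~\ref{prop:or3}. A rooted ternary tree $T_v$ with $|T_v|=4$ takes one of exactly four unlabeled shapes: (i)~a path $P_{v,z}$ with $||P_{v,z}||=3$; (ii)~$v$ has a single child $w$, and $w$ has two leaf children; (iii)~$v$ has two children, a leaf and a non-leaf whose single child is a leaf; (iv)~$v$ has three leaf children. For each shape I would compute the possible root vectors by applying the rules of Definition~\ref{def:rules} from the leaves upward.

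A key simplification is that both $\Psi$ and the domination condition in the statement depend on $\rmv$ only up to permutation of its last three coordinates: if $\rmv \geq (\varphi^{e_s})_s$ and $\sigma$ permutes the indices $\{1,2,3\}$, then the $\sigma$-permuted $\rmv$ dominates the $\sigma$-permuted $\rme$ with the same $\Psi$. Hence it suffices to track $\rmv$ up to membership in its class $\llbracket\rmv\rrbracket$. Since Rule~1 applied to $(1,1,1,1)^t$ always lands in $\llbracket(1,2,1,1)^t\rrbracket$, and Rule~2 applied to two copies of $(1,1,1,1)^t$ always lands in $\llbracket(1,2,1,2)^t\rrbracket$, the bottom-level computations are immediate. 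Propagating upward through the four shapes produces a short list of candidate classes for $\rmv$, namely
\[
\llbracket(3,5,1,1)^t,\, (1,3,3,1)^t,\, (2,3,2,1)^t,\, (1,3,2,2)^t,\, (1,3,2,1)^t,\, (1,2,2,2)^t,\, (2,2,2,2)^t\rrbracket.
\]

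For each of these classes I would exhibit exponents $(e_0,e_1,e_2,e_3)\in\NN^4$ with $\rmv \geq \rme = (\varphi^{e_s})_{s=0,\ldots,3}$ and $e_1+e_2+e_3\geq 3$ (so that $\Psi(\rme)\geq 6$), using the elementary bounds $\varphi^0=1$, $\varphi\leq 2$, $\varphi^2\leq 3$ and $\varphi^3\leq 5$. For example, $(3,5,1,1)^t$ dominates $(\varphi^2,\varphi^3,1,1)^t$ with exponent sum~$3$; $(2,3,2,1)^t$ dominates $(\varphi,\varphi^2,\varphi,1)^t$, sum~$3$; $(1,3,2,2)^t$ dominates $(1,\varphi^2,\varphi,\varphi)^t$, sum~$4$; and $(2,2,2,2)^t$ dominates $(1,\varphi,\varphi,\varphi)^t$, sum~$3$.

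The main obstacle is bookkeeping in topology~(i), where $\rmv$ is produced by three consecutive applications of Rule~1 and at each step the label of the unique child is free, yielding $27$ ordered possibilities. A two-step reduction tames this: first classify the intermediate vector $\rmw$ one level below the root, observing that it must lie in $\llbracket(2,3,1,1)^t,(1,2,2,1)^t\rrbracket$; then a final application of Rule~1 from each representative of that set lands $\rmv$ in one of the four classes appearing for topology~(i) in the list above. The permutation-class invariance is exactly what keeps the enumeration finite and manageable.
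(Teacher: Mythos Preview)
Your proposal is correct and follows essentially the same approach as the paper: a case analysis over the four possible shapes of a rooted ternary tree on four vertices, using the $\llbracket\cdot\rrbracket$ permutation-class invariance to cut down the enumeration, and then exhibiting suitable exponent vectors via the bounds $2\geq\varphi$, $3\geq\varphi^{2}$, $5\geq\varphi^{3}$. Your treatment is in fact slightly more complete than the paper's, since you carry out the path case~(i) explicitly (the paper leaves it to the reader) and arrive at the same four classes $\llbracket(3,5,1,1)^t,(1,3,3,1)^t,(2,3,2,1)^t,(1,2,2,2)^t\rrbracket$ that one obtains there.
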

\begin{proof}
The tree $T_v$ may be one of the four trees depicted in Figure~\ref{or4cases}.
Each case is analyzed separately below (in the order in which they
  appear in Figure~\ref{or4cases}).
\begin{figure}[h]
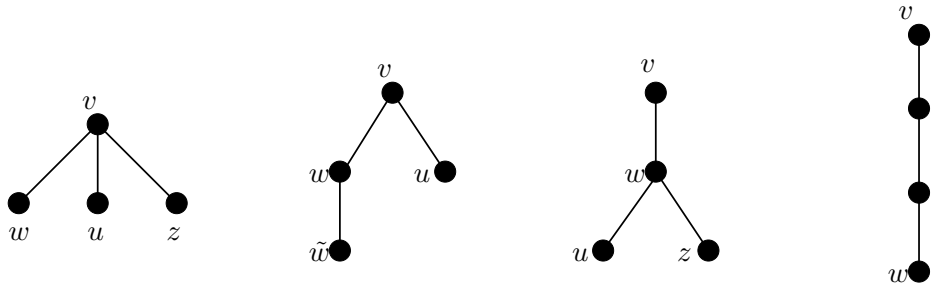

\centering
\ifpdf\input{or4cases.pdf_t}\else\input{or4cases.pstex_t}\fi
\caption{All rooted ternary trees with 4 vertices.}\label{or4cases}
\end{figure}

For the first case, note that by Rule~0 we have that
  $\rmw$, $\rmu$ and $\rmz$ are vectors all of 
  whose coordinates are $1$.
Thus, by Rule~3, we get that 
  $\rmv = (2,2,2,2)^{t} $.
Hence, $\rmv\geq \rme$ where 
  $\rme= (\varphi^{1}, \varphi^{1}, \varphi^{1}, \varphi^{1})^{t}$.

For the second case, by Rule~0 we have that all coordinates of 
  $\rmu$ and $\tilde{\rmw}$ are $1$.
Thus, by Rule~1, $\rmw~\in~\llbracket (1,2,1,1)^{t}\rrbracket$.
 Then, by Rule~2, we get that 
  $\rmv
    \in\llbracket (2,3,1,2)^{t}, (1,2,1,3)^{t}, (1,3,2,2)^{t}\rrbracket$.
Given that $1 = \varphi^{0}$, $2 \geq \varphi^{1}$ and 
  $3 \geq \varphi^{2}$ the result follows.

For the third case, note that $|T_w|=3$ and that the structure of $T_w$ is 
  the same as the second one considered in the proof of 
  Proposition~\ref{prop:or3}. 
Hence, we know that $\rmw \in \llbracket (1,2,1,2)^{t}\rrbracket$.
By Rule~1, we get that 
  $\rmv \in \llbracket (2,3,2,1)^{t},(1,2,2,2)^{t}\rrbracket$.
Given that $1 = \varphi^{0}$, $2 \geq \varphi^{1}$ and $3 \geq \varphi^{2}$ 
  the claimed result follows.

We leave the last case to the interested reader.
\end{proof}

\begin{figure}[h]
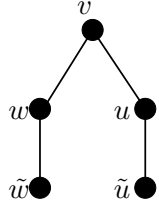

\centering
\ifpdf\input{or5case.pdf_t}\else\input{or5case.pstex_t}\fi
\caption{The tree $T_v$ of Proposition~\ref{prop:or5}.}\label{fig:or5case}
\end{figure}

\begin{proposition}\label{prop:or5} 
Let $T_v$ be a colored rooted ternary tree with $|T_v|=5$ and where
  $v$ has two children which are not leaves.
Then, there are $e_0,e_1,e_2,e_3\in\NN$ such that 
  $\rmv \geq \rme =(\varphi^{e_s})_{s=0,\ldots,3}$ 
  and $\Psi(\rme) \geq 8$. 
\end{proposition}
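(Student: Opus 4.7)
The plan is to use the strong structural constraint on $T_v$. Since $|T_v|=5$ and $v$ has two non-leaf children $w,u$, each of $w,u$ contributes at least two vertices to $T_v$; combined with $v$ itself, this forces $|T_w|=|T_u|=2$, so each of $w$ and $u$ has a unique (leaf) child and no other descendants.

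By the computation in the proof of Proposition~\ref{prop:or2}, applying Rule~0 to the leaf and Rule~1 to its parent shows that $\rmw$ lies in $\llbracket(1,2,1,1)^t\rrbracket$ (and analogously for $\rmu$); the specific vector in the orbit is determined by the label carried by the child of $w$ (respectively $u$). Next, I would apply Rule~2 of Definition~\ref{def:rules} to $\rmw,\rmu$ to obtain $\rmv$. Since $w$ and $u$ are siblings their labels differ, so by Rule~2 the ordered pair $(l_u,l_w)$ belongs to $\{(1,2),(2,3),(3,1)\}$.

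The remaining work is a finite case analysis: three choices of $(l_u,l_w)$ times three choices for the label of each grandchild of $v$, giving at most $27$ subcases. The cyclic symmetry built into Rule~2 and the freedom provided by $\llbracket\cdot\rrbracket$ (which preserves $\Psi$) collapse this to a small number of essentially distinct computations; in each subcase the coordinates of $\rmv$ are small integers arising as sums of at most two products of $1$'s and $2$'s. For each resulting $\rmv=(v_0,v_1,v_2,v_3)^t$, I would read off exponents $e_s$ using $\varphi^0=1$, $\varphi^1<2$, $\varphi^2<3$, $\varphi^3<5$, and verify the target $e_1+e_2+e_3\geq 4$, which is equivalent to $\Psi(\rme)\geq 8$.

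The main obstacle is pure bookkeeping rather than conceptual depth. The only mild worry is that the zeroth coordinate (whose exponent is not counted in $\Psi$) might absorb too much of the available weight; however, inspection of Rule~2 shows that $v_0=u_1w_1\in\{1,2,4\}$, while at least one of $v_1,v_2,v_3$ always arises as a genuine two-term sum of positive products and is therefore at least $3$. Combined with the remaining entries being at least $1$ or $2$, this yields $e_1+e_2+e_3\geq 4$ in every subcase.
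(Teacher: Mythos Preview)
Your plan is essentially the paper's proof: reduce to $|T_w|=|T_u|=2$, note $\rmw,\rmu\in\llbracket(1,2,1,1)^t\rrbracket$, apply Rule~2, and check the finitely many resulting vectors against $\varphi^0=1$, $\varphi^1\le 2$, $\varphi^2\le 3$, $\varphi^3\le 5$. The paper carries this out explicitly and obtains
\[
\rmv\in\llbracket(4,3,1,3)^t,(1,3,3,2)^t,(2,4,2,2)^t,(2,2,1,5)^t,(1,3,1,3)^t,(1,3,4,3)^t\rrbracket,
\]
each of which indeed yields $\Psi(\rme)\ge 8$.

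One caution: the heuristic in your final paragraph does not stand in for the explicit verification. The inference ``two-term sum of positive products, therefore at least $3$'' is false as stated (for instance $u_0w_2+u_1w_3=1\cdot 1+1\cdot 1=2$ does occur), and even granting that some $v_i$ with $i\ge 1$ is at least $3$, the residual information ``the others are at least $1$ or $2$'' only secures $e_1+e_2+e_3\ge 2$ or $3$, not $\ge 4$. In the actual list the target is sometimes met with equality in ways your heuristic does not capture: for $(2,2,1,5)$ one needs $5\ge\varphi^3$ to get $e_3\ge 3$ together with $e_1=1$ from $v_1=2$, and for $(1,3,1,3)$ one needs \emph{two} coordinates $\ge 3$. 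So do carry out the enumeration rather than rely on the shortcut.
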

\begin{proof}
Assume $T_v$ is as depicted in Figure~\ref{fig:or5case}.
Clearly, $\rmw,\rmu \in \llbracket (1,2,1,1)^{t}\rrbracket$.
By Rule~2 we get that 
  $\rmv\in \llbracket (4,3,1,3)^{t},
       (1,3,3,2)^{t},(2,4,2,2)^{t},(2,2,1,5)^{t},(1,3,1,3)^{t},
       (1,3,4,3)^{t}\rrbracket$.
The desired conclusion follows since 
  $1 = \varphi^{0}$, $2 \geq \varphi^{1}$, $3 \geq \varphi^{2}$, 
  $4 \geq \varphi^{2}$ and $5 \geq \varphi^{3}$.
\end{proof}

\begin{proposition}\label{3chior5}  
Let $T_v$ be a colored rooted ternary tree such that $v$ has
  three children $u$, $w$ and $z$. 
Suppose that $1\leq|T_u|\leq3$ and $1\leq |T_{w}|\leq3$. Then,
\begin{itemize}
 \item If $|T_{z}|=2$, there are $e_0,e_1,e_2,e_3\in\NN$ such that 
  $\rmv \geq \rme=(\varphi^{e_s})_{s=0,\ldots,3}$ 
  and $\Psi(\rme) \geq 8$.
 \item If $|T_{z}|=3$, there are $e_0,e_1,e_2,e_3\in\NN$ such that 
  $\rmv \geq \rme=(\varphi^{e_s})_{s=0,\ldots,3}$ 
  and $\Psi(\rme) \geq 10$.
\end{itemize}
\end{proposition}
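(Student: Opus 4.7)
I would prove Proposition~\ref{3chior5} by direct case analysis on the triple $(|T_u|, |T_w|, |T_z|)$, exactly in the spirit of Propositions~\ref{prop:or2}--\ref{prop:or5}. Since $v$ has three children with labels $(1,2,3)$, Rule~3 of Definition~\ref{def:rules} applies, giving
\[
\rmv \;=\; \bigl(u_0 w_0 z_0 + u_1 w_1 z_1,\ u_0 w_2 z_3 + u_1 w_3 z_2,\ u_2 w_3 z_0 + u_3 w_2 z_1,\ u_2 w_1 z_3 + u_3 w_0 z_2\bigr)^t.
\]
The strategy is, for each admissible combination of subtree sizes (together with the permutations inside $\llbracket\cdot\rrbracket$), to substitute the possible $\rmu$, $\rmw$, $\rmz$ into this formula and read off exponents $e_0,\ldots,e_3 \in \NN$ with $\varphi^{e_s} \leq v_s$ whose $\Psi$ meets the required threshold.

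\textbf{Step 1: catalog the child vectors.} By Rule~0, a size-$1$ subtree has root vector $(1,1,1,1)^t$. By Proposition~\ref{prop:or2}, a size-$2$ subtree yields a vector in $\llbracket(1,2,1,1)^t\rrbracket$. By (the proof of) Proposition~\ref{prop:or3}, a size-$3$ subtree yields a vector in $\llbracket(2,3,1,1)^t,\,(1,2,2,1)^t,\,(1,2,1,2)^t\rrbracket$. This produces a finite list of candidates for $\rmu$, $\rmw$, and $\rmz$.

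\textbf{Step 2: dispatch the bottleneck.} The extremal subcase is $|T_u|=|T_w|=1$. Here $\rmu=\rmw=(1,1,1,1)^t$ and Rule~3 collapses to $\rmv=(z_0+z_1,\,z_2+z_3,\,z_0+z_1,\,z_2+z_3)^t$. When $|T_z|=2$, the worst choice is $\rmz=(1,2,1,1)^t$, giving $\rmv=(3,2,3,2)^t \geq (\varphi^2,\varphi,\varphi^2,\varphi)^t$ and hence $\Psi=8$; the other two permutations of $\rmz$ produce $\rmv=(2,3,2,3)^t$ with $\Psi=10$. When $|T_z|=3$, the worst choice is $\rmz=(2,3,1,1)^t$, giving $\rmv=(5,2,5,2)^t \geq (\varphi^3,\varphi,\varphi^3,\varphi)^t$ and $\Psi=10$; the remaining permutations and the other two orbits from Proposition~\ref{prop:or3} all produce $\rmv$ with $\Psi\geq 10$.

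\textbf{Step 3: remaining cases and the main obstacle.} If $|T_u|\geq 2$ or $|T_w|\geq 2$, then at least one coordinate of the corresponding vector is $\geq 2$, and every product appearing in Rule~3 is bounded below by the corresponding product in the bottleneck. A short mechanical enumeration --- entirely analogous to the case analyses in Propositions~\ref{prop:or4} and~\ref{prop:or5} --- confirms that all such instances comfortably satisfy the required lower bounds. The chief obstacle is the sheer bookkeeping (up to $9$ combinations for $(|T_u|,|T_w|)$, times $2$ or $3$ for $|T_z|$, times the permutations encoded by $\llbracket\cdot\rrbracket$). The subtle point is that Rule~3 mixes coordinates of $\rmu$, $\rmw$, $\rmz$ asymmetrically, so a blanket pointwise monotonicity claim fails; one instead checks a handful of representative subcases and argues by a careful pairing of terms that the rest are dominated by the bottleneck. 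In line with the style of the earlier proofs, I would write out the bottleneck explicitly and leave the straightforward verifications to the reader.
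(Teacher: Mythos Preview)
Your Step~2 computation for the bottleneck $|T_u|=|T_w|=1$ is exactly the paper's proof, and the numbers you obtain match. The difference is that the paper stops there: it observes that every root vector satisfies $\rmx\geq(1,1,1,1)^t$ (an immediate induction on Rules~0--3), and that Rule~3 is coordinatewise monotone in $\rmu,\rmw,\rmz$, since each output coordinate is a sum of products of nonnegative inputs. Hence replacing $\rmu$ and $\rmw$ by $(1,1,1,1)^t$ can only decrease $\rmv$, and the general case $1\leq|T_u|,|T_w|\leq 3$ reduces at once to the bottleneck you already handled.

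Your Step~3 is therefore unnecessary, and the assertion that ``a blanket pointwise monotonicity claim fails'' is mistaken: monotonicity of Rule~3 holds and is precisely what the paper invokes. The ``asymmetric mixing'' of coordinates is irrelevant here---what matters is only that each term $u_iw_jz_k$ is nondecreasing in each factor. So you can delete the proposed enumeration and pairing argument entirely; once you note $\rmu,\rmw\geq(1,1,1,1)^t$ and monotonicity of Rule~3, the proof is complete after Step~2.
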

\begin{proof} 
We first note that if $|T_x|\geq2$, then
  $\rmx \geq (1,1,1,1)^{t}$. 
This implies that it is enough 
to prove both statements for the case $|T_u|=1$ 
  and $|T_{w}|=1$. Observe that by Rule~0, we have that 
  $\rmu = \rmw=(1,1,1,1)^{t}$.

For the first statement, assume $|T_{z}|=2$. 
By Rule~1, we have that 
  $\rmz\in\llbracket (1,2,1,1)^{t}\rrbracket$.
Then, by Rule~3 we have that
  $\rmv\in\llbracket (3,3,2,2)^{t}, (2,3,3,2)^{t}\rrbracket$.
The result follows, since $2 \geq \varphi^{1}$ and~$3 \geq \varphi^{2}$.

Assume now that $|T_{z}|=3$.
{From} the proof of 
  Proposition \ref{prop:or3} we know that 
  $\rmz \in \llbracket (2,3,1,1)^{t},(1,2,2,1)^{t}\rrbracket$.
Then, by Rule~3 we have that
  $\rmv\in\llbracket (5,2,5,2)^{t}, (3,4,3,4)^{t}, 
  (3,3,3,3)^{t}, (2,4,2,4)^{t}\rrbracket$.
The desired conclusion follows since 
  $2 \geq \varphi^{1}$, $3 \geq \varphi^{2}$, 
  $4 \geq \varphi^{2}$ and $5 \geq \varphi^{3}$.
\end{proof}

\begin{lemma}\label{lem:strip} 
Let $T = T_{v}$ be a colored rooted ternary tree, such that 
  $T_{v} = T_{\tilde{v}} \cup P_{\tilde{v},v}$ where $P_{\tilde{v},v}$ is non-trivial.
If $\tilde{\rmv}\geq
      \tilde{\rme}=(\varphi^{\tilde{e}_s})_{s=0,\ldots,3}$
  with $\tilde{e}_0,\tilde{e}_1,\tilde{e}_2,\tilde{e}_3\in\NN$, then
  there are $e_0,e_1,e_2,e_3 \in\NN$ such that
  $\rmv \geq \rme=(\varphi^{{e}_s})_{s=0,\ldots,3}$ and
  $\Phi(\rme) \geq \Phi(\tilde{\rme})+||P_{\tilde{v},v}||$.
\end{lemma}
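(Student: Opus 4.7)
The plan is to proceed by induction on $\ell := \|P_{\tilde v, v}\|$, reducing the whole lemma to a single-edge statement that is then iterated $\ell$ times. Let $w$ denote the unique child of $v$ along $P_{\tilde v, v}$. Then $T_w = T_{\tilde v}\cup P_{\tilde v, w}$ with $\|P_{\tilde v, w}\| = \ell - 1$, so the inductive hypothesis supplies $f_0, f_1, f_2, f_3 \in \NN$ such that $\rmw \geq \rmf := (\varphi^{f_s})_{s = 0, \ldots, 3}$ and $\Phi(\rmf) \geq \Phi(\tilde\rme) + (\ell - 1)$. Because $v$ has $w$ as its only child, $\rmv$ is produced from $\rmw$ by one application of Rule~1, so the lemma reduces to the single-step claim: if $\rmw \geq \rmf$ and $\rmv$ is derived from $\rmw$ via Rule~1, then there exist $e_0, e_1, e_2, e_3 \in \NN$ with $\rmv \geq \rme := (\varphi^{e_s})_{s = 0, \ldots, 3}$ and $\Phi(\rme) \geq \Phi(\rmf) + 1$.

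For this single-step claim I would first exploit the invariance of $\Psi$ and $\Phi$ under permutations of the last three coordinates (noted explicitly in the excerpt) to reduce, without loss of generality, to the label-$1$ output $\rmv = (w_1, w_0 + w_1, w_3, w_2)^t$. The natural candidates are $e_0 := f_1$, $e_2 := f_3$, $e_3 := f_2$, leaving only $e_1$ delicate; I would choose it as the largest integer satisfying $\varphi^{e_1} \leq \varphi^{f_0} + \varphi^{f_1}$. Using only the identity $\varphi^2 = \varphi + 1$ together with $\varphi < 2$, the optimum $e_1$ splits into five subcases: $e_1 = f_0$ when $f_0 - f_1 \geq 2$; $e_1 = f_1 + 2$ when $f_0 = f_1 + 1$; $e_1 = f_1 + 1$ when $f_0 \in \{f_1 - 1, f_1\}$; and $e_1 = f_1$ when $f_0 \leq f_1 - 2$.

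With these explicit formulas, direct computation yields $\Delta\Psi := \Psi(\rme) - \Psi(\rmf) = 2(e_1 - f_1)$ and a correction shift
\[
\Delta C := \bigl([e_1 > f_1] + [f_3 > f_1] + [f_2 > f_1]\bigr) - \bigl([f_1 > f_0] + [f_2 > f_0] + [f_3 > f_0]\bigr),
\]
so $\Delta\Phi = \Delta\Psi - \Delta C$ must be shown to be at least $1$ in each subcase. The four cases with $\Delta\Psi \geq 2$ are handled by routine case-by-case bookkeeping, using that each Iverson bracket lies in $\{0, 1\}$ and that the indicators $[f_j > f_0]$ and $[f_j > f_1]$ for $j \in \{2, 3\}$ are monotone in the smaller threshold. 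The main obstacle is the subcase $f_0 \leq f_1 - 2$, where $e_1 = f_1$ and $\Delta\Psi = 0$; the entire gain in $\Phi$ must then come from $\Delta C \leq -1$. This drop is delivered by two observations: since $e_0 = f_1 = e_1$, one has $[e_1 > e_0] = 0$, which annihilates the $+1$ contribution of $[f_1 > f_0] = 1$ to the old correction; and since $f_1 > f_0$, each indicator $[f_j > f_1]$ with $j \in \{2, 3\}$ is dominated by $[f_j > f_0]$. Together these yield $\Delta C \leq -1$, so $\Delta\Phi \geq 1$, closing the induction.
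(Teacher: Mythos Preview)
Your proof is correct and follows essentially the same approach as the paper: both reduce to the length-$1$ case, apply Rule~1 with $e_0=f_1$, $e_2=f_3$, $e_3=f_2$, and then split into cases on the sign and magnitude of $f_0-f_1$ to pick $e_1$ and verify $\Delta\Phi\geq 1$. The only cosmetic differences are that you organize the bookkeeping with Iverson brackets and, in the subcase $f_0\geq f_1+2$, you take the sharper $e_1=f_0$ where the paper is content with $e_1=f_1+2$; both choices suffice.
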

\begin{proof} 
It is enough to prove the result for $P_{\tilde{v},v}$ of length 1.
By Rule~1, we get that 
  $\rmv\in [\tilde{\rmv}]$ 
  where $\tilde{\rmv} \geq \tilde{\rme}$ 
  for some $\tilde{\rme}\in \llbracket 
    (\varphi^{\tilde{e}_1}, \varphi^{\tilde{e}_1}+\varphi^{\tilde{e}_0},
     \varphi^{\tilde{e}_3}, \varphi^{\tilde{e}_2})^{t}\rrbracket$.
Assume $\tilde{\rme}$ is the vector within the double brackets 
  (the other cases are similar).
We now consider several scenarios:
\begin{itemize}
\item \textbf{Case $\tilde{e}_1>\tilde{e}_0+1$:} 
Clearly, $\rmv\geq \rme= (\varphi^{\tilde{e}_1}, \varphi^{\tilde{e}_1},
    \varphi^{\tilde{e}_3}, \varphi^{\tilde{e}_2})^{t}$.
Moreover, $\Psi(\rme) = \Psi(\tilde{\rme})$ and 
  $\left|\set{s}{e_{s}> e_{0}}\right|
    \leq \left|\set{s}{\tilde{e}_{s}> \tilde{e}_{0}}\right|-1$.
Hence, $\Phi(\rme) \geq \Phi(\tilde{\rme}) + 1$.

\item \textbf{Case $\tilde{e}_1\in\{\tilde{e}_0,\tilde{e}_0+1\}$:} 
If $\tilde{e}_1=\tilde{e}_0$, then 
  $\varphi^{\tilde{e}_1}+\varphi^{\tilde{e}_0}=2\varphi^{\tilde{e}_1}\geq 
     \varphi^{\tilde{e}_1+1}$.
Since $1+\varphi=\varphi^{2}$, if $\tilde{e}_1=\tilde{e}_0+1$, then
  $\varphi^{\tilde{e}_1}+\varphi^{\tilde{e}_0}=\varphi^{\tilde{e}_1+1}$.
Hence, $\rmv\geq\rme= (\varphi^{\tilde{e}_1},\varphi^{\tilde{e}_1+1},
  \varphi^{\tilde{e}_3},\varphi^{\tilde{e}_2})^{t}$. 
Moreover, $\Psi(\rme) = \Psi(\tilde{\rme})+2$ and 
  $\left|\set{s}{e_{s}> e_{0}}\right|\leq 
   \left|\set{s}{\tilde{e}_{s}> \tilde{e}_{0}}\right|+1$.
Hence, $\Phi(\rme) \geq \Phi(\tilde{\rme}) + 1$.

\item \textbf{Case $\tilde{e}_1 \leq \tilde{e}_0-1$:} 
Since $1+\varphi=\varphi^2$, if $\tilde{e}_1=\tilde{e}_0-1$, then
  $\varphi^{\tilde{e}_1}+\varphi^{\tilde{e}_0}=\varphi^{\tilde{e}_1+2}$.
If $\tilde{e}_1\leq\tilde{e}_0-2$, then
  $\varphi^{\tilde{e}_1}+\varphi^{\tilde{e}_0}\geq\varphi^{\tilde{e}_1+2}$.
Hence, $\rmv\geq\rme= (\varphi^{\tilde{e}_1},\varphi^{\tilde{e}_1+2},
   \varphi^{\tilde{e}_3},\varphi^{\tilde{e}_2})^{t}$.
Moreover, $\Psi(\rme) = \Psi(\tilde{\rme})+4$ and 
  $\left|\set{s}{e_{s}> e_{0}}\right|
    \leq \left|\set{s}{\tilde{e}_{s}> \tilde{e}_{0}}\right|+3$.
Hence, $\Phi(\rme) \geq \Phi(\tilde{\rme}) + 1$.
\end{itemize}
\end{proof}

The following result is an immediate consequence of Lemma~\ref{lem:strip}.
\begin{corollary}\label{onechild} 
Let $T = T_{v}$ be a colored rooted ternary tree, such that 
  $T_{v} = T_{\tilde{v}} \cup P_{\tilde{v},v}$ where $P_{\tilde{v},v}$ is non-trivial. 
If $\tilde{\rmv}\geq\tilde{\rme}$, then an $\rme$ exists 
  such that $\rmv \geq \rme$ and
\[
\Psi(\rme) \geq \Psi(\tilde{\rme})+
  \max\{||P_{\tilde{v},v}||-3,0\}\,.
\]
\end{corollary}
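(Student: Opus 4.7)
The plan is to extract the corollary directly from Lemma~\ref{lem:strip} by converting the $\Phi$-bound it provides into a $\Psi$-bound, and then independently establishing the trivial bound $\Psi(\rme) \geq \Psi(\tilde{\rme})$ so that taking the maximum yields the desired inequality.

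First, I would invoke Lemma~\ref{lem:strip} to obtain $e_0,e_1,e_2,e_3\in\NN$ with $\rmv\geq\rme=(\varphi^{e_s})_{s=0,\ldots,3}$ such that $\Phi(\rme) \geq \Phi(\tilde{\rme}) + ||P_{\tilde{v},v}||$. Recalling the definition $\Phi(\rmf)=\Psi(\rmf)-|\{s:f_s>f_0\}|$ for any non-negative integer exponent vector and noting that this cardinality is an integer in $\{0,1,2,3\}$, I would rewrite the inequality as
\[
\Psi(\rme) \geq \Psi(\tilde{\rme}) + ||P_{\tilde{v},v}|| + |\{s:e_s>e_0\}| - |\{s:\tilde{e}_s>\tilde{e}_0\}|\,.
\]
The correction term on the right is bounded below by $-3$, yielding $\Psi(\rme)\geq\Psi(\tilde{\rme}) + ||P_{\tilde{v},v}|| - 3$. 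This handles the first branch of the maximum.

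For the second branch, I would argue that $\Psi(\rme)\geq\Psi(\tilde{\rme})$ always holds. The proof of Lemma~\ref{lem:strip} reduces the general case to a path of length one, and inspection of the three cases there (depending on how $\tilde{e}_1$ compares with $\tilde{e}_0$) shows that $\Psi$ increases by exactly $0$, $2$, or $4$ in a single-edge extension. Iterating this construction edge-by-edge along $P_{\tilde{v},v}$ therefore produces an $\rme$ for which $\Psi$ is non-decreasing relative to $\tilde{\rme}$, and for which (by the same lemma) $\rmv\geq\rme$.

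Combining the two lower bounds yields $\Psi(\rme)\geq\Psi(\tilde{\rme}) + \max\{||P_{\tilde{v},v}||-3,0\}$, which is the claimed inequality. The whole argument is mechanical; the only point requiring a little care is making sure the same $\rme$ witnesses both lower bounds, which follows because both arise from the same iterated application of Lemma~\ref{lem:strip}'s single-edge construction. No serious obstacle is expected.
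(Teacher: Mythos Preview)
Your proposal is correct and is essentially the argument the paper has in mind; the paper records the corollary as an ``immediate consequence'' of Lemma~\ref{lem:strip} without spelling out any details. Your observation that the single-edge construction in the proof of Lemma~\ref{lem:strip} never decreases $\Psi$ (the three cases yield increments of $0$, $2$, and $4$) is exactly the missing ingredient needed to cover the branch $||P_{\tilde{v},v}||\leq 3$, and you are right that the same iteratively constructed $\rme$ simultaneously witnesses both the $\Phi$-bound and the $\Psi$-monotonicity.
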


\begin{lemma}\label{twochildren}
Let $T_{v}$ be a colored rooted ternary tree, such that $v$ has 
  two children $w$ and $u$. 
If $\rmw\geq\rme^{w}=(\varphi^{e^{w}_s})_{s=0,\ldots,3}$, 
  with $e^{w}_0, e^{w}_1, e^{w}_2, e^{w}_3\in\NN$ and  
  $\rmu\geq\rme^{u}=(\varphi^{e^{u}_s})_{s=0,\ldots,3}$, 
  with $e^{u}_0, e^{u}_1, e^{u}_2,e^{u}_3\in\NN$, 
  then there are $e_0,e_1,e_2,e_3 \in\NN$ such that  
  $\rmv \geq \rme=(\varphi^{{e}_s})_{s=0,\ldots,3}$ and 
  $\Psi(\rme)=\Psi(\rme^{w})+\Psi(\rme^{u})$.
\end{lemma}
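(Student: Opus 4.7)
The plan is to apply Rule~2 of Definition~\ref{def:rules} directly to $\rmu$ and $\rmw$ in order to obtain a componentwise lower bound for $\rmv$. Since $v$ has exactly two children in a colored ternary tree, by swapping the names of $u$ and $w$ if necessary I may assume $(l_u,l_w)\in\{(1,2),(2,3),(3,1)\}$, so that $\rmv$ is one of the three explicit vectors listed in Rule~2 (selected by $l_u$).

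For each coordinate of $\rmv$ I would produce a lower bound of the form $\varphi^{\text{exponent}}$. A single product $u_i w_j$ satisfies $u_i w_j\geq\varphi^{e^u_i+e^w_j}$ by the componentwise hypotheses on $\rmu$ and $\rmw$, while a sum $u_i w_j+u_k w_l$ can be bounded below by simply dropping one of the two summands. The key observation is that, across all three vectors of Rule~2, the six products that ever appear are exactly
\[
 u_1 w_1,\ u_3 w_2,\ u_1 w_3,\ u_2 w_1,\ u_0 w_2,\ u_3 w_0,
\]
and the two products containing a zero index, namely $u_0 w_2$ and $u_3 w_0$, always occur inside sums whose other summand has both indices nonzero. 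Consequently, in every case I can discard the ``zero-indexed'' summands and retain the four products $u_1 w_1,\,u_3 w_2,\,u_1 w_3,\,u_2 w_1$, with $u_1 w_1$ appearing as the first coordinate and the remaining three appearing, in an order depending on the case, as the other three coordinates.

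Setting $e_0:=e^u_1+e^w_1$ and letting $(e_1,e_2,e_3)$ be the corresponding permutation of $(e^u_1+e^w_3,\,e^u_2+e^w_1,\,e^u_3+e^w_2)$ then yields $\rmv\geq\rme$ with
\[
 \Psi(\rme)=2(e_1+e_2+e_3)=2\sum_{j=1}^{3}e^u_j+2\sum_{j=1}^{3}e^w_j=\Psi(\rme^u)+\Psi(\rme^w),
\]
which is precisely what the lemma asserts. The only delicate point is that the target identity is an equality rather than an inequality, so each ``drop a summand'' move is essentially forced; the main obstacle, such as it is, is verifying uniformly that the recipe ``discard any summand containing $u_0$ or $w_0$'' yields the same multiset of exponents $\{e^u_1+e^w_3,\,e^u_2+e^w_1,\,e^u_3+e^w_2\}$ in all three vectors of Rule~2. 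This is straightforward case analysis, since the three Rule~2 vectors are cyclic rotations of each other in their last three coordinates.
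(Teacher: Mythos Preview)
Your proposal is correct and follows essentially the same approach as the paper: apply Rule~2, use the componentwise bounds $\rmu\ge\rme^{u}$ and $\rmw\ge\rme^{w}$, then in each sum discard the summand containing a zero index to retain precisely the four products $u_1w_1,\,u_1w_3,\,u_2w_1,\,u_3w_2$, whose exponents yield $\Psi(\rme)=\Psi(\rme^{u})+\Psi(\rme^{w})$. The paper's proof is the same argument written out by explicitly listing the three resulting vectors $\rme$; your observation that the last three coordinates are cyclic rotations is a tidy way to collapse that case analysis.
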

\begin{proof}
Since $\rmw\geq\rme^{w}$ and
  $\rmu\geq\rme^{u}$, by Rule~2 we have 
  that $\rmv\geq\tilde{\rmv}$ where 
\[
\tilde{\rmv}
  \in \left\{ \left( \begin{array}{c}
  \varphi^{e^{w}_1+e^{u}_1} \\
  \varphi^{e^{w}_0+e^{u}_2}  + \varphi^{e^{w}_1+e^{u}_3} \\
  \varphi^{e^{w}_3+e^{u}_2} \\
  \varphi^{e^{w}_2+e^{u}_1}  + \varphi^{e^{w}_3+e^{u}_0} 
  \end{array}\right), \left(\begin{array}{c}
  \varphi^{e^{w}_1+e^{u}_1} \\
  \varphi^{e^{w}_3+e^{u}_2} \\
  \varphi^{e^{w}_2+e^{u}_1}  + \varphi^{e^{w}_3+e^{u}_0}\\
  \varphi^{e^{w}_1+e^{u}_3} + \varphi^{e^{w}_0+e^{u}_2} 
  \end{array}\right), \left(\begin{array}{c}
  \varphi^{e^{w}_1+e^{u}_1} \\
  \varphi^{e^{w}_3+e^{u}_0}  + \varphi^{e^{w}_2+e^{u}_1}\\
  \varphi^{e^{w}_0+e^{u}_2} + \varphi^{e^{w}_1+e^{u}_3}\\
  \varphi^{e^{w}_3+e^{u}_2}
  \end{array}\right)  \right\}\,.
\]
Moreover, $\varphi^{e^{w}_0+e^{u}_2}+\varphi^{e^{w}_1+e^{u}_3} 
         \geq \varphi^{e^{w}_1+e^{u}_3}$
  and $\varphi^{e^{w}_2+e^{u}_1}+\varphi^{e^{w}_3+e^{u}_0}
         \geq \varphi^{e^{w}_2+e^{u}_1}$,
  so depending on the value of~$\tilde{\rmv}$ we can take 
\[
\rme \in \left\{
  \begin{pmatrix}
  \varphi^{e^{w}_1+e^{u}_1} \\
  \varphi^{e^{w}_1+e^{u}_3} \\
  \varphi^{e^{w}_3+e^{u}_2} \\
  \varphi^{e^{w}_2+e^{u}_1}   
  \end{pmatrix},
  \begin{pmatrix}
  \varphi^{e^{w}_1+e^{u}_1} \\
  \varphi^{e^{w}_3+e^{u}_2} \\
  \varphi^{e^{w}_2+e^{u}_1} \\
  \varphi^{e^{w}_1+e^{u}_3} 
  \end{pmatrix},
  \begin{pmatrix}
  \varphi^{e^{w}_1+e^{u}_1} \\
  \varphi^{e^{w}_2+e^{u}_1}\\
  \varphi^{e^{w}_1+e^{u}_3}\\
  \varphi^{e^{w}_3+e^{u}_2}
  \end{pmatrix}\right\}\,,
\]
and obtain that $\rmv\geq \rme$ and
  $\Psi(\rme)=\Psi(\rme^{w})+\Psi(\rme^{u})$.
\end{proof}

\begin{lemma}\label{threechildren}
Let $T_{v}$ be a colored rooted ternary tree, such that $v$ has three 
  children $w$, $u$ and $z$. 
If $\rmw\geq\rme^{w}=(\varphi^{e^{w}_s})_{s=0,\ldots,3}$ 
  with $e^{w}_0, e^{w}_1, e^{w}_2, e^{w}_3\in\NN$,  
  $\rmu\geq\rme^{u}=(\varphi^{e^{u}_s})_{s=0,\ldots,3}$ 
  with $e^{u}_0, e^{u}_1,e^{u}_2,e^{u}_3\in\NN$ and 
  $\rmz\geq\rme^{z}=(\varphi^{e^{z}_s})_{s=0,\ldots,3}$ 
  with $e^{z}_0, e^{z}_1,e^{z}_2,e^{z}_3\in\NN$,
  then there are  $e_0,e_1,e_2,e_3 \in\NN$ such that  
  $\rmv \geq \rme=(\varphi^{{e}_s})_{s=0,\ldots,3}$ 
  and $\Psi(\rme)=\Psi(\rme^{w})+\Psi(\rme^{u})+
  \Psi(\rme^{z})$.
\end{lemma}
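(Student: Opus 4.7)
The plan is to apply Rule~3 of Definition~\ref{def:rules} directly and then, in each of the last three coordinates of $\rmv$, discard the summand whose subscripts include a~$0$ from one of $\rme^{u}, \rme^{w}, \rme^{z}$, retaining only the ``good'' summand whose subscripts all lie in $\{1,2,3\}$.

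Concretely, after relabelling the children so that $(l_{u},l_{w},l_{z})=(1,2,3)$ (permissible because $\Psi$ is invariant under permutations of the coordinates indexed by $1,2,3$), Rule~3 gives
\[
\rmv = \begin{pmatrix}
u_{0} w_{0} z_{0} + u_{1} w_{1} z_{1}\\
u_{0} w_{2} z_{3} + u_{1} w_{3} z_{2}\\
u_{2} w_{3} z_{0} + u_{3} w_{2} z_{1}\\
u_{2} w_{1} z_{3} + u_{3} w_{0} z_{2}
\end{pmatrix}.
\]
Combined with the hypothesized componentwise bounds, each entry is a sum of two $\varphi$-powers. In each of rows $1,2,3$ exactly one summand avoids the subscript $0$ from every child: $u_{1} w_{3} z_{2}$ in row~$1$, $u_{3} w_{2} z_{1}$ in row~$2$, and $u_{2} w_{1} z_{3}$ in row~$3$. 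Retaining only these terms and using $\rmu \geq \rme^{u}$, $\rmw \geq \rme^{w}$, $\rmz \geq \rme^{z}$, I set
\[
e_{1} = e^{u}_{1} + e^{w}_{3} + e^{z}_{2},\qquad
e_{2} = e^{u}_{3} + e^{w}_{2} + e^{z}_{1},\qquad
e_{3} = e^{u}_{2} + e^{w}_{1} + e^{z}_{3},
\]
and $e_{0}=0$, which is valid because the $0$-th coordinate of $\rmv$ is at least $u_{0}w_{0}z_{0}+u_{1}w_{1}z_{1}\geq 2\geq\varphi^{0}$.

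The crucial combinatorial observation is that, across the three chosen ``good'' summands, the subscripts drawn from $\rme^{u}$ form the multiset $\{1,2,3\}$, and likewise for those drawn from $\rme^{w}$ and from $\rme^{z}$. Consequently
\[
e_{1}+e_{2}+e_{3} \;=\; \sum_{j=1}^{3}\bigl(e^{u}_{j}+e^{w}_{j}+e^{z}_{j}\bigr),
\]
which after multiplication by~$2$ yields the claimed identity $\Psi(\rme)=\Psi(\rme^{u})+\Psi(\rme^{w})+\Psi(\rme^{z})$.

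The only possible obstacle is to verify that a ``good'' summand indeed exists in each of rows $1,2,3$ of the Rule~3 formula and that these summands jointly use each subscript in $\{1,2,3\}$ precisely once for each of $u,w,z$. This is a direct bookkeeping check on the explicit formula from Rule~3 of Definition~\ref{def:rules}, requiring no work beyond care with indices; unlike Lemma~\ref{twochildren}, no cross-case consideration of which coordinate permutation is forced by the labels is necessary here because Rule~3 provides a single formula once $(l_u,l_w,l_z)=(1,2,3)$ is fixed.
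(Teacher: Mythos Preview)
Your proposal is correct and follows essentially the same approach as the paper: apply Rule~3, then in each coordinate retain only the summand whose subscripts all lie in $\{1,2,3\}$, and observe that across the three nonzero-index rows each of $u,w,z$ contributes each subscript in $\{1,2,3\}$ exactly once. The only cosmetic difference is that the paper sets $e_{0}=e^{w}_{1}+e^{u}_{1}+e^{z}_{1}$ (taking the good summand in row~$0$ as well) rather than your $e_{0}=0$; since $\Psi$ ignores the $0$-th coordinate, both choices work.
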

\begin{proof}
By Rule~3 we have that 
\[
\rmv = \begin{pmatrix}
  \varphi^{e^{w}_0+e^{u}_0+e^{z}_0} + \varphi^{e^{w}_1+e^{u}_1+e^{z}_1}\\
  \varphi^{e^{w}_0+e^{u}_3+e^{z}_2} + \varphi^{e^{w}_1+e^{u}_3+e^{z}_2}\\
  \varphi^{e^{w}_2+e^{u}_3+e^{z}_0} + \varphi^{e^{w}_3+e^{u}_2+e^{z}_1}\\
  \varphi^{e^{w}_2+e^{u}_1+e^{z}_3} + \varphi^{e^{w}_3+e^{u}_0+e^{z}_2}
  \end{pmatrix} \geq 
  \begin{pmatrix}
  \varphi^{e^{w}_1+e^{u}_1+e^{z}_1}\\
  \varphi^{e^{w}_1+e^{u}_3+e^{z}_2}\\
  \varphi^{e^{w}_3+e^{u}_2+e^{z}_1}\\
  \varphi^{e^{w}_2+e^{u}_1+e^{z}_3} 
  \end{pmatrix}\,.
\]
Let $\rme$ be the last vector in the preceding expression 
  and note that 
  $\Psi(\rme)=\Psi(\rme^w)+\Psi(\rme^u)+\Psi(\rme^z)$.
\end{proof}

\subsection{Main Lemma} \label{subsec:mainlemma}
The main result of this work, i.e.~Theorem~\ref{theo:main}, will
  follow almost directly from the next key claim which roughly
  says that the root vector $\rmv$ of a colored rooted ternary remainder
  free tree $T=T_{v}$ either has large components relative to the size 
  of $T_{v}$, or $T_{v}$ corresponds to a \emph{short} path
  $P_{\tilde{v},v}$ and a tree $T_{\tilde{v}}$ whose root 
  vertex $\tilde{\rmv}$ has large components relative to the 
  size of $T_{\tilde{\rmv}}$.
\begin{lemma}\label{exp:sat} 
Let  $T = T_v$ be a colored rooted ternary remainder free tree such 
  that $|T|\geq4$. 
Then, there is a path $P_{\tilde{v},v}$ such that  
  $T_{v} = T_{\tilde{v}}\cup P_{\tilde{v},v}$ with 
  $0\leq ||P_{\tilde{v},v}||\leq 5$ (if $||P_{\tilde{v},v}||=0$, then
  $\tilde{v}=v$ and $T_{\tilde{v}}=T_{v}$)
  and  there are $e^{\tilde{v}}_0, e^{\tilde{v}}_1, e^{\tilde{v}}_2, 
  e^{\tilde{v}}_3 \in \NN$ such that
\begin{equation}\label{eqn:main}
\tilde{\rmv} \,\geq\, 
  \rme^{\tilde{v}} =(\varphi^{e^{\tilde{v}}_s})_{s=0,\ldots,3}\,, 
\quad\mbox{ and }\quad
\Psi(\rme^{\tilde{v}})  \,\geq\, \frac{|T_{\tilde{v}}|+7}{2}\,.
\end{equation}
\end{lemma}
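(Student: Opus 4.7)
The plan is to proceed by strong induction on $|T|$. The base case $|T|=4$ follows from Proposition~\ref{prop:or4} with $\tilde{v}=v$, $\|P_{\tilde{v},v}\|=0$, since the proposition provides $\rme$ with $\rmv\geq\rme$ and $\Psi(\rme)\geq 6\geq 11/2$. For the inductive step $|T|\geq 5$, I identify the \emph{spine} of $T_v$: the unique maximal chain $v=x_0,x_1,\dots,x_k$ of single-child descents from the root, terminating at a node $x_k$ that is either a leaf or has at least two children. Any admissible $\tilde{v}$ must lie on this spine, so I split according to $k$ and the nature of $x_k$.

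\textbf{Case 1 ($x_k$ branches and $k\leq 5$).} I take $\tilde{v}=x_k$. Since remainders are purely local configurations, $T_{x_k}$ inherits remainder-freeness, and the remainder-free hypothesis forces a dichotomy at $x_k$: either every child $w_i$ satisfies $|T_{w_i}|\leq 2$, or every $w_i$ satisfies $|T_{w_i}|\geq 3$. In the first sub-case $|T_{x_k}|\leq 7$, so the bound follows from Propositions~\ref{prop:or4}, \ref{prop:or5}, and~\ref{3chior5}. In the second sub-case, for each child I apply Proposition~\ref{prop:or3} (if $|T_{w_i}|=3$) or the induction hypothesis followed by Corollary~\ref{onechild} to transport the bound up to $w_i$ (if $|T_{w_i}|\geq 4$); a brief calculation yields the uniform estimate $\Psi(\rme^{w_i})\geq (|T_{w_i}|+4)/2$, and summing via Lemma~\ref{twochildren} or Lemma~\ref{threechildren} gives $\Psi(\rme^{x_k})\geq (|T_{x_k}|+7)/2$, with additional slack when $x_k$ has three children. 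An edge sub-case, $|T_{x_k}|=3$ (two leaf children), requires instead taking $\tilde{v}=x_{k-1}$ (possible since $k\geq 1$, as $|T_v|\geq 5$), for which $|T_{\tilde{v}}|=4$ and Proposition~\ref{prop:or4} supplies the bound.

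\textbf{Case 2 ($x_k$ branches and $k\geq 6$).} I take $\tilde{v}=v$. Case~1 applied to $T_{x_k}$ (or Proposition~\ref{prop:or4} applied to $T_{x_{k-1}}$ in the $|T_{x_k}|=3$ sub-case) yields a bound at a spine node, which Corollary~\ref{onechild} extends along the spine to $v$:
\[
\Psi(\rme^v)\;\geq\;\frac{|T_{x_k}|+7}{2}+(k-3)\;=\;\frac{|T_v|+k+1}{2}\;\geq\;\frac{|T_v|+7}{2},
\]
using $k\geq 6$ in the final inequality.
\textbf{Case 3 ($x_k$ is a leaf).} Then $T_v$ is a pure path of length $k=|T_v|-1\geq 4$; I take $\tilde{v}=v$ and establish $\Psi(\rme^v)\geq 2k$ by a separate induction on $k$ that iterates Rule~1 from $(1,1,1,1)^t$. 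The base $k=3$ is Proposition~\ref{prop:or4}; the inductive step uses the three-way case split of Lemma~\ref{lem:strip}, amortizing any stagnation (Lemma~\ref{lem:strip}'s Case~1) against the preceding Case~3 bonus. Since $2k\geq (k+8)/2=(|T_v|+7)/2$ for all $k\geq 3$, the required bound holds.

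The step I expect to be most delicate is Case~1 with small children: the estimate $(|T_{w_i}|+4)/2$ is slack-free once combined for two children via Lemma~\ref{twochildren}, so any deficiency from a size-$1$ or size-$2$ child would be fatal. It is precisely the remainder-free hypothesis, together with its inheritance to subtrees, that rules out the dangerous mixed configurations and confines the problematic small-children scenarios to finitely many trees of size at most $7$, which the explicit propositions of Section~\ref{subsec:css} dispose of directly (modulo the $|T_{x_k}|=3$ adjustment). A secondary subtlety, the pure-path growth of Case~3 for arbitrary label sequences, is handled by observing that Lemma~\ref{lem:strip}'s stagnating Case~1 can only immediately follow a Case~3 step, so that the amortized $\Psi$-growth per step along the spine remains at least $2$.
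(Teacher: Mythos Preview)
Your dichotomy claim in Case~1 is false when $x_k$ has three children, and this is a genuine gap. Remainder-freeness (Definition~\ref{def:remain}) rules out a leaf child of $x_k$ only when \emph{every} sibling has size at least~$3$ (Case~\ref{3ch}), and it rules out a size-$2$ child only when $x_k$ has \emph{exactly two} children and the other has size~$\geq 3$ (Case~\ref{1ch}). With three children, a size-$2$ child is never part of a remainder, and a leaf child is not a remainder as soon as some sibling has size~$\leq 2$. Concretely, $x_k$ with children of sizes $(1,1,b)$, $(1,2,b)$, $(2,2,b)$ for $b\geq 4$, or $(2,a,b)$ for $a,b\geq 3$, are all remainder-free yet mix small and large subtrees. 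Your uniform child estimate $\Psi(\rme^{w_i})\geq(|T_{w_i}|+4)/2$ fails for $|T_{w_i}|\in\{1,2\}$ (yielding $0$ and $2$, respectively), so Lemma~\ref{threechildren} cannot absorb these cases. The paper's Case~3 devotes several separate bullets precisely to these mixed three-child configurations; for instance, when two children have size~$\leq 2$ and the third has size~$\geq 4$, the paper computes the root vector directly via Rule~3 and extracts an extra $\Psi$-gain of~$4$ beyond what the large child alone provides.

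A secondary concern is your Case~3 amortization: the assertion that a stagnating step (Lemma~\ref{lem:strip}'s first case) can only immediately follow a $+4$ step is not proved, and with the specific $\rme$-choices of that lemma's proof one can produce label sequences giving two consecutive stagnations. Whether $\Psi\geq 2k$ nevertheless holds for paths with a \emph{better} choice of~$\rme$ is a separate question you do not address. The paper avoids this entirely: its Case~1 (root with one child) applies induction to the child and either extends the pending path by one edge or, once the path reaches length~$6$, invokes Corollary~\ref{onechild} to reset $\tilde v$ to $v$ with exactly the required gain. No standalone path analysis is needed.
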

\begin{proof} 
We proceed by induction on $|T|$. 
For the base case $|T_{v}|=4$, by  Proposition~\ref{prop:or4},
  there exists an $\rme\leq\rmv$ such that 
  $\Psi(\rme)\geq6 > (|T_{v}|+7)/2$.
Let $T_v$ be a colored rooted ternary tree remainder free with 
  $|T_v|\geq 5$. 
We separate the proof in three cases depending on the number 
  of children of the root~$v$. 
It is clear that for any node $u$ of $T_{v}$, the tree
  $T_u$ is a colored rooted ternary remainder free tree. 
\begin{description}
\item 
\textbf{Case 1 ($v$ has one child $w$):}
We have $|T_{w}|=|T_{v}|-1\geq 4$.
By induction
  $T_w = T_{\tilde{w}} \cup P_{\tilde{w},w}$ with
  $0\leq ||P_{\tilde{w},w}||\leq 5$ and 
  $\tilde{\rmw}$ satisfying~(\ref{eqn:main}).
If $||P_{w,\tilde{w}}||<5$,
  then $T_{v}=T_{\tilde{w}}\cup P_{\tilde{w},v}$ and thus it
  satisfies the desired property.
By Corollary~\ref{onechild}, we know that there is 
  and $\rme\leq\rmv$ such that $\Psi(\rme)
  \geq \Psi(\rme^{\tilde{w}})+||P_{\tilde{w},v}||-3$.
Given that $|T_{v}|=|T_{\tilde{w}}|+||P_{w,\tilde{w}}||+1$,
  if $||P_{w,\tilde{w}}||=5$, then
  there is an $\rme \leq \rmv$ such that
\[
\Psi(\rme) \geq \Psi(\rme^{\tilde{w}}) + ||P_{\tilde{w},v}||-3
  \geq \frac{|T_{\tilde{w}}|+7}{2} + ||P_{\tilde{w},v}||-3
  = \frac{|T_{v}|+7}{2}\,.
\]
Therefore, $T_v$ satisfies the desired property.

\item \textbf{Case 2 ($v$ has two children $w$ and $u$):} 
First, note that $T_w$ and $T_u$ have size at least $2$ (otherwise 
  we would have, say $|T_{w}|=1$ and $|T_{u}|=|T_{v}|-|T_{w}|-1\geq 3$, implying
  that $w$ is a remainder of $T$, a contradiction).
If $|T_{w}|=2$, then $|T_{u}|=2$ (otherwise, $|T_{u}|\geq 3$, implying that  
  there is a remainder~$S$ of~$T$ such that 
  $w\in S$, a contradiction).
Since $|T_{w}|=|T_{u}|=2$, by Proposition~\ref{prop:or5}
  we have that there is an $\rme\leq\rmv$ such that
  $\Psi(\rme)=8> (|T_{v}|+7)/2$. 

Hence, we assume that $|T_{w}|, |T_{u}| \geq 3$. 
If $|T_{w}|= |T_{u}| = 3$, by Proposition~\ref{prop:or3} and 
  Lemma~\ref{twochildren}, we get that there is an
  $\rme\leq\rmv$ such that 
  $\Psi(\rme)=8> (|T_{v}|+7)/2$. 

We now assume that $|T_{w}| = 3$ and $|T_{u}| \geq 4$. 
By induction, $T_{u} = T_{\tilde{u}}\cup P_{\tilde{u},u}$ 
  with $0\leq||P_{\tilde{u},u}||\leq 5$ 
  and~$\tilde{\rmu}$ satisfying~(\ref{eqn:main}).
By Lemma~\ref{twochildren}, there is an
  $\rme\leq\rmv$ such that 
  $\Psi(\rme) = \Psi(\rme^{w}) + \Psi(\rme^{u})$.
By Proposition~\ref{prop:or3}, Corollary~\ref{onechild}, and the fact that
  $|T_{v}|=|T_{\tilde{u}}|+||P_{\tilde{u},u}||+4$, 
\begin{eqnarray*}
\Psi(\rme) & \geq & \Psi(\rme^{w})+\Psi(\rme^{\tilde{u}})
    + \max\{||P_{\tilde{u},u}||-3,0\} \\
  & \geq & 4 + \frac{|T_{\tilde{u}}|+7}{2}
    + \max\{||P_{\tilde{u},u}||-3,0\} \\
  & = & \frac{|T_{v}|+7}{2}+\frac{4-||P_{\tilde{u},u}||}{2}
    + \max\{||P_{\tilde{u},u}||-3,0\}  \\
  & \geq & \frac{|T_{v}|+7}{2}
    +\frac{1}{2}\max\{3-||P_{\tilde{u},u}||,||P_{\tilde{u},u}||-3,0\} \\
  & \geq & \frac{|T_{v}|+7}{2}\,.
\end{eqnarray*}
Hence, $T_{v}$ satisfies the  desired property.

Finally, we assume that $|T_{w}|, |T_{u}| \geq 4$. 
By induction, $T_{w}= T_{\tilde{w}} \cup P_{\tilde{w},w}$ and 
  $T_{u}= T_{\tilde{u}} \cup P_{\tilde{u},u}$
  where  $0\leq ||P_{\tilde{w},w}||,||P_{\tilde{u},u}||\leq 5$
  and $\tilde{u},\tilde{w}$ satisfying~(\ref{eqn:main}).
By Lemma~\ref{twochildren}, there is an
  $\rme\leq\rmv$ such that 
  $\Psi(\rme) = \Psi(\rme^{w}) + \Psi(\rme^{u})$.
By Corollary~\ref{onechild}
  and given that $|T_{v}|=|T_{\tilde{w}}|+|T_{\tilde{u}}|
  +||P_{\tilde{w},w}||+||P_{\tilde{u},u}||+1$,
\begin{eqnarray*}
\Psi(\rme) 
  & \geq & \Psi(\rme^{\tilde{w}}) + \Psi(\rme^{\tilde{u}})+
       +\max\{||P_{\tilde{w},w}||{-}3,0\}+\max\{||P_{\tilde{u},u}||{-}3,0\} \\
  & \geq & \frac{|T_{\tilde{w}}|+7}{2}+\frac{|T_{\tilde{u}}|+7}{2}
       +\max\{||P_{\tilde{w},w}||{-}3,0\}+\max\{||P_{\tilde{u},u}||{-}3,0\} \\
  & = &  \frac{|T_{v}|+7}{2}
    +\frac{1}{2}\max\{||P_{\tilde{w},w}||{-}3,3{-}||P_{\tilde{w},w}||\}
    +\frac{1}{2}\max\{||P_{\tilde{u},u}||{-}3,3{-}||P_{\tilde{u},u}||\} \\
  & \geq & \frac{T_{v}+7}{2}\,.
\end{eqnarray*}
Hence, $T_{v}$ satisfies the desired property.

\item \textbf{Case 3 ($v$ has three children $w$, $u$ and $z$):} 
Since $|T_{v}|\geq 5$, it can not happen that 
  $|T_{u}|=|T_{u}|=|T_{z}|=1$.
If $1\leq|T_w|\leq3$, $1\leq|T_u|\leq3$ and 
  $2\leq|T_z|\leq3$, we have that: if $|T_z|=2$, then $|T_{v}| \leq 9$ and
  by the first statement of Proposition~\ref{3chior5} there is a vector 
  $\rme\leq\rmv$ such that 
  $\Psi(\rme) \geq 8 = 16/2 \geq (|T_{v}|+7)/2$; if~$|T_z|=3$, 
  then $|T_{v}| \leq 10$ and
  by the second statement of Proposition~\ref{3chior5} there is a 
  vector $\rme\leq\rmv$ such that 
  $\Psi(\rme) \geq 10 > 17/2 \geq (|T_{v}|+7)/2$.
Therefore, $T_v$ satisfies the desired property.

We now assume that at least one of the children of $v$ induces a subtree 
  with at least 4 vertices. 
\begin{itemize}
\item If $1\leq |T_{w}|,|T_u|\leq 2$ and $|T_{z}|\geq4$,
  then by Rules~0,~1 and~3, we have
\[
\rmv \geq 
  \begin{pmatrix}
  \varphi^{e^{z}_0} +\varphi^{e^{z}_1}\\
  \varphi^{e^{z}_3} + \varphi^{e^{z}_2}\\
  \varphi^{e^{z}_0} +  \varphi^{e^{z}_1}\\
  \varphi^{e^{z}_3} +  \varphi^{e^{z}_2}
  \end{pmatrix}\,,
\quad\text{or}\quad
\rmv \geq 
  \begin{pmatrix}
  \varphi^{e^{z}_0} +  \varphi^{e^{z}_1}\\
  \varphi^{e^{z}_2} +  \varphi^{e^{z}_3}\\
  \varphi^{e^{z}_3} +  \varphi^{e^{z}_2}\\
  \varphi^{e^{z}_1} +  \varphi^{e^{z}_0}
  \end{pmatrix}\,,
\quad\text{or}\quad
\rmv \geq
   \begin{pmatrix}
   \varphi^{e^{z}_0} +  \varphi^{e^{z}_1}\\
   \varphi^{e^{z}_0} +  \varphi^{e^{z}_1}\\
   \varphi^{e^{z}_2} +  \varphi^{e^{z}_3}\\
   \varphi^{e^{z}_2} +  \varphi^{e^{z}_3}
  \end{pmatrix}\,.
\]
If  $e^{z}_3=e^{z}_2$, given that $2>\varphi$, we may choose the vector 
  $\rme$ from the set
\[
\left\{  
  \begin{pmatrix}
  \varphi^{e^{z}_1} \\
  \varphi^{e^{z}_3+1} \\
  \varphi^{e^{z}_1} \\
  \varphi^{e^{z}_2+1}  
  \end{pmatrix}, 
  \begin{pmatrix}
  \varphi^{e^{z}_1} \\
  \varphi^{e^{z}_3+1} \\
  \varphi^{e^{z}_2+1}\\
  \varphi^{e^{z}_1} 
  \end{pmatrix}, 
  \begin{pmatrix}
  \varphi^{e^{z}_1} \\
  \varphi^{e^{z}_1}\\
  \varphi^{e^{z}_2+1}\\
  \varphi^{e^{z}_3+1}
  \end{pmatrix}
\right\}\,.
\]
If not, we have $e^{z}_3\geq e^{z}_2+1$ 
  (analogously $e^{z}_2\geq e^{z}_3+1$) and given that 
  $\varphi+1=\varphi^2$, we may choose the vector $\rme$ 
  from the set
\[
\left\{  
  \begin{pmatrix}
  \varphi^{e^{z}_1} \\
  \varphi^{e^{z}_2+2} \\
  \varphi^{e^{z}_1} \\
  \varphi^{e^{z}_3}  
  \end{pmatrix}, 
  \begin{pmatrix}
  \varphi^{e^{z}_1} \\
  \varphi^{e^{z}_3} \\
  \varphi^{e^{z}_2+2}\\
  \varphi^{e^{z}_1} 
  \end{pmatrix},
  \begin{pmatrix}
  \varphi^{e^{z}_1} \\
  \varphi^{e^{z}_1}\\
  \varphi^{e^{z}_3}\\
  \varphi^{e^{z}_2+2}
  \end{pmatrix}
\right\}.
\]
Therefore, for any choice of $\rme$ we get that
  $\Psi(\rme)= \Psi(\rme^{z}) + 4$. 
By induction, $T_{z} = T_{\tilde{z}}\cup P_{\tilde{z},z}$ 
  with $0\leq||P_{\tilde{z},z}||\leq 5$ and 
  $\tilde{\rmz}$ satisfying~(\ref{eqn:main}).
Since  $|T_{v}|\leq |T_{\tilde{z}}|+||P_{\tilde{z},z}||+5$,
  by Corollary~\ref{onechild}, 
\begin{eqnarray*}
\Psi(\rme)
  & \geq & 4+\Psi(\rme^{\tilde{z}}) +
           \max\{||P_{\tilde{z},z}||-3,0\} \\
  & \geq & \frac{|T_{\tilde{z}}|+7}{2}+4+
           \max\{||P_{\tilde{z},z}||-3,0\} \\
  & \geq & \frac{|T_{v}|+7}{2}
        +\frac{3-||P_{\tilde{z},z}||}{2}
        +\max\{||P_{\tilde{z},z}||-3,0\} \\
  & = & \frac{|T_{v}|+7}{2}
    +\frac{1}{2}\max\{3-||P_{\tilde{z},z}||,||P_{\tilde{z},z}||-3\} \\
  & \geq & \frac{|T_{v}|+7}{2}\,.
\end{eqnarray*}
Hence, $T_{v}$ satisfies the desired property.

\item If $2\leq |T_w|,|T_{u}|\leq 3$ and $|T_{z}|\geq 4$. 
By Lemma~\ref{threechildren}, there is an
  $\rme\leq\rmv$ such that 
  $\Psi(\rme)=\Psi(\rme^{w})+\Psi(\rme^{u})+
    \Psi(\rme^{z})$.
By Proposition~\ref{prop:or2} and Proposition~\ref{prop:or3}, we have 
  $\Psi(\rme^{w})=2(|T_{w}|-1)$ and
  $\Psi(\rme^{u})=2(|T_{u}|-1)$.
By induction, $T_{z} = T_{\tilde{z}}\cup P_{\tilde{z},z}$ 
  with $0\leq||P_{\tilde{z},z}||\leq 5$ and $\tilde{\rmz}$ 
  satisfying~(\ref{eqn:main}).
Since $|T_{v}|= |T_{w}|+|T_{u}|+|T_{\tilde{z}}|+||P_{\tilde{z},z}||+1$, 
  by Corollary~\ref{onechild}, 
\begin{eqnarray*}
\Psi(\rme)
  & \geq & 2(|T_{w}|-1)+2(|T_{u}|-1)+\Psi(\rme^{\tilde{z}}) 
     +\max\{||P_{\tilde{z},z}||-3,0\} \\
  & \geq & \frac{|T_{\tilde{z}}|+7}{2}+2(|T_{w}|+|T_{u}|)-4
     +\max\{||P_{\tilde{z},z}||-3,0\} \\
  & = & \frac{|T_{v}|+7}{2}+
    \frac{3}{2}(|T_{w}|+|T_{u}|)-\frac{||P_{\tilde{z},z}||}{2}-\frac{9}{2}
    +\max\{||P_{\tilde{z},z}||-3,0\} \\
  & \geq & \frac{|T_{v}|+7}{2}+
    \frac{3-||P_{\tilde{z},z}||}{2}
    +\max\{||P_{\tilde{z},z}||-3,0\} \\
  & = & \frac{|T_{v}|+7}{2}
    +\frac{1}{2}\max\{3-||P_{\tilde{z},z}||,||P_{\tilde{z},z}||-3\} \\
  & \geq & \frac{|T_{v}|+7}{2}\,.
\end{eqnarray*}
Hence, $T_{v}$ satisfies the desired property.

\item The case $|T_w|=1$, $|T_u|=3$, and $|T_{z}|\geq4$ can not 
  happen, since it would imply that $\{w\}$ is a remainder of 
  $T$.

\item If $2\leq |T_w|\leq 3$ and $|T_u|, |T_{z}|\geq 4$. 
By Lemma~\ref{threechildren}, there is an
  $\rme\leq\rmv$ such that 
  $\Psi(\rme)=\Psi(\rme^{w})+\Psi(\rme^{u})+
    \Psi(\rme^{z})$.
By Proposition~\ref{prop:or2} and Proposition~\ref{prop:or3}, we have 
  $\Psi(\rme^{w})=2(|T_{w}|-1)$.
By induction, $T_{z} = T_{\tilde{z}}\cup P_{\tilde{z},z}$ and 
  $T_{z} = T_{\tilde{z}}\cup P_{\tilde{z},z}$ 
  with $0\leq ||P_{\tilde{u},u}||,||P_{\tilde{z},z}||\leq 5$ and 
  $\tilde{\rmu},\tilde{\rmz}$ satisfying~(\ref{eqn:main}).
Since $|T_{v}|= |T_{w}|+|T_{\tilde{u}}|+|T_{\tilde{z}}|+||P_{\tilde{u},u}||
  +||P_{\tilde{z},z}||+1$, by Corollary~\ref{onechild}, 
\begin{eqnarray*}
\Psi(\rme)
  & \geq & 2(|T_{w}|-1)+\Psi(\rme^{\tilde{u}})+\Psi(\rme^{\tilde{z}})  
     +\max\{||P_{\tilde{u},u}||-3,0\} 
     +\max\{||P_{\tilde{z},z}||-3,0\} \\
  & \geq & \frac{|T_{\tilde{u}}|+7}{2}+\frac{|T_{\tilde{z}}|+7}{2}+
     2(|T_{w}|-1) \\
  & & \mbox{}\quad
     +\max\{||P_{\tilde{u},u}||-3,0\} 
     +\max\{||P_{\tilde{z},z}||-3,0\} \\
  & = & \frac{|T_{v}|+7}{2}+
    \frac{3}{2}|T_{w}|-2+\frac{6-||P_{\tilde{u},u}||-||P_{\tilde{z},z}||}{2} \\
  & & \mbox{}\quad
     +\max\{||P_{\tilde{u},u}||-3,0\} 
     +\max\{||P_{\tilde{z},z}||-3,0\} \\
  & \geq & \frac{|T_{v}|+7}{2}+
     \frac{6-||P_{\tilde{u},u}||-||P_{\tilde{z},z}||}{2} \\
  & & \mbox{}\quad
     +\max\{||P_{\tilde{u},u}||-3,0\} 
     +\max\{||P_{\tilde{z},z}||-3,0\} \\
  & = & \frac{|T_{v}|+7}{2}
    +\frac{1}{2}\max\{3-||P_{\tilde{z},z}||,||P_{\tilde{z},z}||-3\} \\
  & \geq & \frac{|T_{v}|+7}{2}\,.
\end{eqnarray*}
Hence, $T_{v}$ satisfies the desired property.

\item If $|T_w|, |T_u|, |T_{z}|\geq 4$. 
Similar to the preceding case.
\end{itemize}
\end{description}
\end{proof}

\section{Proof of Main Results}\label{sec:results}
\begin{prooff}{Theorem~\ref{theo:main}}
Recall that $T(\Delta_n)$ is a colored rooted ternary tree on
  $|\Delta_{n}|-3$ nodes such that its root vector $\rmv$ 
  is equal to the degeneracy vector of $\Delta_n$.
By Lemmas~\ref{inters} and~\ref{lemma:b}, the rooted colored ternary 
  tree $\tilde{T}(\Delta_n)=T(\Delta_n)\setminus V_{R(T(\Delta_n))}$ is 
  remainder free and $|\tilde{T}(\Delta_n)|\geq |T(\Delta_n)|/3$.
Clearly, the root vector~$\tilde{\rmv}$ of~$\tilde{T}(\Delta_n)$
  is such that $\rmv\geq \tilde{\rmv}$.
The Main Lemma guarantees that there are $e_0,e_1,e_2,e_3 \in\NN$ such that
  $\rmv\geq \rme=(\varphi^{e_s})_{s=0,..,3}$ and 
\[
\Psi(\rme) \,\geq\, \frac{(|\tilde{T}(\Delta_n)|-5)+7}{2}
  \,=\, \frac{|\tilde{T}(\Delta_n)|+2}{2}
  \,\geq\, \frac{|T(\Delta_n)|+6}{12}
  \,=\, \frac{|\Delta_n|+3}{12}\,.
\]
Moreover, we know that $\Delta_{n}[\phi] = \Delta_{n}[-\phi]$ 
  for all $\phi \in \{+,-\}^3$.
Hence, the degeneracy of $\Delta_{n}$ is at least 
  $2\sum_{s=1}^{3} \varphi^{e_s} \geq 6\varphi^{\frac{1}{3}\Psi(\rme)} \geq
  6 \varphi^{(|\Delta_n|+3)/36}$.
\end{prooff}

\begin{prooff}{Corollary~\ref{coro:main}}  
Let $G$ be a cubic planar graph such that its geometric dual 
  graph is the stack triangulation $\Delta$. 
We know that the number of perfect matchings of $G$ is equal to half 
  of the degeneracy of $\Delta$. 
From Euler's formula we get that $2|\Delta|=|G|-4$. 
Therefore, by Theorem~\ref{theo:main} we have that the number of 
  perfect matchings of $G$ is at least  $3 \varphi^{|G|/72}$. 
\end{prooff}

\section{Final Comments}\label{sec:final-comments}
The approach followed throughout this work seems to be specially
  well suited for calculating the degeneracy of triangulations 
  that have some sort of recursive tree like construction, 
  e.g.~$3$-trees.
It would be interesting to identify other such families of 
  triangulations where similar methods allowed to lower bound
  their degeneracy. 
Of particular relevance would be to show that the approach we follow
  in this work can actually be successfully applied to obtain exponential lower
  bounds for non-trivial families of non-planar bridgeless cubic graphs.

As already mentioned, our arguments are motivated by the transfer 
  matrix method as used by statistical physicists. 
We believe that most of the arguments we developed throughout
  this work can be stated in more combinatorial terms, except
  maybe for our Main Lemma.
It might eventually be worthwhile to clarify the implicit
  combinatorial structure of our proof arguments.

\section*{Acknowledgement}
The authors thank Martin Loebl for his encouragement, comments, and many
  insightful discussions.

\bibliographystyle{plain}
\bibliography{biblio}

\begin{thebibliography}{1}

\bibitem{BBS}
A.~Brandst\"adt, V.~B. Le, and J.~P. Spinrad.
\newblock {\em Graph classes: A survey}.
\newblock SIAM Monographs in Discrete Mathematics and Applications. SIAM, 1999.

\bibitem{CS}
M.~Chudnovsky and P.~Seymour.
\newblock Perfect matchings in planar cubic graphs.
\newblock Manuscript, 2008.

\bibitem{EKK10}
L.~Esperet, F.~Kardos, and D.~Kr\'al'.
\newblock A superlinear bound on the number of perfect matchings in cubic
  bridgeless graphs.
\newblock arXiv:1002.4739v2, 2010.

\bibitem{FZ}
S.~Felsner and F.~Zickfeld.
\newblock On the number of planar orientations with prescribed degrees.
\newblock {\em The Electronic Journal of Combinatorics}, 15(1), 2008.

\bibitem{JKL}
A.~Jime\'nez, M.~Kiwi, and M.~Loebl.
\newblock Satisfying states of triangulations of a convex $n$-gon.
\newblock {\em The Electronic Journal of Combinatorics}, 17(1), 2010.

\bibitem{MA}
J.-F. Marckert and M.~Albenque.
\newblock Some families of increasing planar maps.
\newblock {\em Electronic Journal of Probability}, 13:1624--1671, 2008.

\bibitem{O}
S.~Oum.
\newblock Perfect matching in claw-free cubic graphs.
\newblock Manuscript, 2009.

\bibitem{S}
A.~Schrijver.
\newblock Counting 1-factors in regular bipartite graphs.
\newblock {\em Journal of Combinatorial Theory, Series B}, 72(1):122--135,
  1998.

\bibitem{V}
M.~Voorhoeve.
\newblock A lower bound for the permanents of certain (0, 1)-matrices.
\newblock {\em Indagationes Mathematicae}, 82(1):83--86, 1979.

\end{thebibliography}
\end{document}

% LocalWords:  Triangulations bridgeless triangulations cutedge Plummer's Ising
% LocalWords:  groundstates cutedges Tutte's Esperet Kardos superlinear Oum gon
% LocalWords:  Voorhoeve Schrijver Chudnovsky groundstate Loebl gons Schnyder